\numberwithin{equation}{section}
\numberwithin{equation}{subsection}
\newtheorem{theorem}{Theorem}[section]
\newtheorem{corollary}[theorem]{Corollary}
\newtheorem{lemma}[theorem]{Lemma}
\newtheorem{proposition}[theorem]{Proposition}
\newtheorem{remark}[theorem]{Remark}
\numberwithin{equation}{section}
\numberwithin{equation}{subsection}
\begin{document}

\title[Inverse problems with hybrid lenses]{Inverse problems with hybrid lenses}
\author[I. Amro, F. Fneish, R. Kansoh,  A. Sabra, and W. Tabbara]{I. Amro, F. Fneish, R. Kansoh, A. Sabra, and W. Tabbara\\}
\providecommand{\keywords}[1]{\textbf{\textit{Index terms---}} #1}
\thanks{MSC Classification Codes: 34A55, 35F61, 35Q60, 78A05, 26B10.}
\thanks{
The authors were partially supported by the CAMS summer research program from the American University of Beirut. A. S. was partially supported by the University Research Board, Grant 104107 from the American University of Beirut.}
\address{Department of Mathematics, Center of Advanced Mathematical Sciences \\ American University of Beirut\\ Beirut}
\email{iza04@mail.aub.edu, faf21@mail.aub.edu, rak104@mail.aub.edu, asabra@aub.edu.lb, wkt00@mail.aub.edu}

\begin{abstract}
We design lenses composed of a combination of standard freeform refracting surface and flat metasurface refracting an arbitrary incident field into a collimated beam with a fixed direction. In the near-field case, we study the existence of such lenses refracting a bright object into a predefined image at the target.
\end{abstract}

\keywords{System of Partial Differential Equations, Geometric Optics, metamaterial, inverse problems. }

\maketitle

\tableofcontents

\section{Introduction}
Snell's law of refraction \cite[Chapter 1]{born2013principles} describes the trajectory of a ray when passing from one medium to another. The interface separating both media creates a discontinuity in the refractive index causing a change in the direction of the propagating radiation. Due to recent technological advances, metasurfaces were introduced in optical designs. These are ultra-thin artificially engineered materials composed of nano-structures that introduce abrupt phase shifts along the optical path to bend light in non-standard ways \cite{achouri2021electromagnetic}. The corresponding generalized Snell's law was derived and verified in multiple papers in optical engineering for example \cite{doi:10.1126/science.1210713} and \cite{10.1007/978-3-030-50399-4_35} in two dimensions. The three-dimensional corresponding law is proved in \cite{GUTIERREZ2021102134} using the Fermat principle and in \cite{gutirrez2023maxwell} from the distributional Maxwell system.   

In this paper, we study two inverse problems in Geometric Optics involving hybrid lenses composed of a standard refracting surface and a flat metasurface. We first study the existence of such a lens refracting a variable incident field of directions emitted from a planar source into a predefined constant direction, see Figure \ref{fig: general field}. We assume that the face closer to the source is a given conventional refracting surface and that the other face of the lens is the flat metasurface. The goal is to analyze the existence of a phase discontinuity in a neighborhood of every point on the metasurface so that all rays leave the lens along a given constant direction. To do this, we use the standard and the generalized Snell's laws to reduce the problem into a system of partial differential equations and show that the incident field must satisfy a curl condition \eqref{curl condition}. Assuming \eqref{curl condition}, we employ the implicit function theorem and the notion of envelop to find a sufficient condition on the lower face of the lens and the incident field so that the required phase discontinuity exists.
The considered model in this part is a non-imaging far-field inverse problem where we are not interested in creating an image but rather in the direction of the rays leaving the lens.

Using the far-field analysis, we next consider the following imaging problem in the near-field case. Given a bijective map $T$ between a planar source and a planar target, our goal is to find a hybrid lens that achieves $T$. In other words, every point in the source emits a ray that is refracted by the lens into its corresponding image defined by $T$. The rays entering and leaving the lens are assumed vertical see Figure \ref{Imaging Problem}. In this model, we show that the lower face of the lens satisfies a system of semilinear partial differential equations \eqref{eq:PDE of rho} and find in Theorem \ref{thm:Hartman verification} conditions on $T$ so that a solution exists. Once the lower face is found, the existence of the phase discontinuity will be deduced from the far-field analysis requiring additional conditions on the map $T$, see Theorem \ref{thm:non singularity}. We apply this analysis in Section \ref{subsec:Examples} to several examples in particular when $T$ is a magnification/contraction, and when $|T-I|$ is constant.x

We put our results in perspective both from the mathematical and optical points of view. The design of a two-dimensional convex, analytic standard lens focusing light rays emitted from one point source into a point image was first solved in \cite{Friedman:1987vg} using a fixed point type argument. The result was later generalized to three dimensions in \cite{Gutierrez:13} where the author constructed freeform lenses refracting rays emitted from a point source into a constant direction or a point image. The case of a general field was later studied in \cite{doi:10.1137/15M1030807} and necessary and sufficient conditions were found for the existence of $C^2$ lenses in $\mathbb R^3$ refracting an arbitrary incident field into a collimated beam. Reflective models and combinations of refracting and reflecting surfaces are studied in \cite{Gutierrez:14}. Illumination problems with one reflective or refractive surface are addressed in \cite{KochenginOlikerTempski}, \cite{Wang}, \cite{GUTIERREZ2014655}, \cite{KARAKHANYAN20201278}, \cite{10.4310/jdg/1279114301} for the point source case and in \cite{Doskolovich:18} and \cite{GutTou} for the planar source case. Illumination models involving a single lens (2 surfaces) are considered in \cite{Rolland:21}, and \cite{Gutierrez:2018wo}.

The standard refracting surfaces involved in the models of this paper are not assumed to be convex or concave. The use and design of such freeform surfaces became possible 
with the technological advances in manufacturing design, ultra-precision cutting, grinding, and polishing \cite{10.1117/12.2188523}, \cite{Rolland:21}. Their unconstrained geometry presents a unique opportunity to achieve optical tasks that are not attainable with traditional concave/convex surfaces \cite{Thompson:12}, \cite{HicksMirror}. Though revolutionary, freeform surfaces have their limitations as their potentially complicated geometry makes them costly to manufacture \cite{inproceedings} and could result in ray obstruction \cite{Gutierrez:2018wo}. 
Recent advances in nanotechnology have further expanded the possibilities to include Metamaterial in optics. This is an accelerating field of research with promising applications in commercial industries such as smartphone cameras \cite{doi:10.1126/science.aam8100}, antennas \cite{canet2019metamaterials}, and medical imaging devices \cite{9201430}. Their applicability gained public interest for their efficiency in eliminating chromatic aberration \cite{Aieta:13},  \cite{CapassoVisible}. Mathematically, such claims were investigated in \cite{GUTIERREZ2021102134}, where the authors also studied the existence of single-element metasurfaces refracting collimated incident field into an arbitrary field direction. Corresponding illumination problems and connections with optimal transport are explored in \cite{Gutierrez:18}, \cite{gutierrez2022metasurfaces}. 
Hybrid combination of standard lenses and metasurfaces also appears in the literature with several applications to image corrections \cite{HybridCapasso}, \cite{Shih:22}. 

Within this evolving landscape of optical physics, our paper takes root. We develop mathematical tools to study two inverse problems in geometric optics involving freeform refractors and metasurfaces. The paper is organized as follows. Section \ref{sec:Snell} introduces the standard and the generalized Snell's laws. In Section \ref{sec: uniform refraction}, we precisely state and solve the far-field problem while finding necessary conditions on the incident field in Section \ref{subs: Necessary condition }, and sufficient conditions on the incident field and lower face of the hybrid lens in Section \ref{subsec:sufficient condition} for a solution to exist. Section \ref{subsec:vertical} considers the case when 
the field entering and leaving the lens is vertical and formulates the corresponding sufficient conditions in Theorem \ref{thm:sufficient for vertical field}. In Section \ref{sec:Imaging}, we study the near-field imaging problem and formulate the corresponding system of PDEs in Proposition \ref{prop: PDE of rho} that must be satisfied by the lower face of the desired lens. We find in Theorem \ref{thm:Hartman verification} necessary and sufficient conditions for the existence of local solution, and then apply in Section  \ref{sec: existence of phi} the analysis of Section \ref{subsec:sufficient condition} to find additional conditions on the map $T$ so that a phase discontinuity exists in a neighborhood of the metasurface. Our paper concludes with a few examples of allowable maps $T$ in three dimensions, Section \ref{subsec:Examples}, and in two dimensions, Section \ref{subsec:2D}.
\paragraph{\bf List of Notations.}
Before embarking on our analysis we introduce some notation that will be used throughout the paper.
\begin{itemize}
\item All vectors in $\mathbb R^n$ are assumed to be row vectors.
\item If $A$ is an $m\times n$ matrix and $B$ is an $m\times p$ matrix then $A\otimes B=A^tB$.
\item For ${\bf a}=(a_1,a_2)$, ${\bf a}_{\perp}=(-a_2,a_1)$. 
\item For ${\bf F}(x)=(F_1(x),\cdots , F_m(x))$ a $C^1$ field in a domain in $\Omega \subseteq \mathbb R^n$, the derivative $D{\bf F}(x)$ is the $m\times n$ matrix $$D{\bf F}(x)=\left(\dfrac{\partial F_k}{\partial x_i}\right)_{1\leq k\leq m, 1\leq i\leq n}.$$
\item The scalar curl of a two dimensional $C^1$ vector field ${\bf a}(x)=(a_1(x),a_2(x))$ is 
\[
\nabla\times {\bf a}(x)=(a_2(x))_{x_1}-(a_1(x))_{x_2}
\]
\item The scalar cross product of two dimensional vectors ${\bf a}=(a_1,a_2)$ and ${\bf b}=(b_1,b_2)$ is
$${\bf a}\times {\bf b}=\left|\begin{matrix} a_1 & a_2\\ b_1 & b_2\end{matrix}\right|.$$
\item The cross product of three dimensional vectors ${\bf v}=(v_1,v_2,v_3)$ and ${\bf v'}=(v_1',v_2',v_3')$ is
$${\bf v}\times {\bf v'}=\left|\begin{matrix} {\bf i} & {\bf j} & {\bf k}\\ v_1 & v_2 & v_3\\ v_1' & v_2' & v_3'\end{matrix}\right|.$$
\end{itemize}

\section{Preliminaries: Snell's laws of refraction}\label{sec:Snell}
The refractive index of a material corresponding to an electromagnetic wave with angular frequency $\omega$ is given by $n=\dfrac{c}{v}$ with $c$ the speed of the wave in a vacuum and $v$ its apparent velocity in the medium. The wave number $k=\dfrac{\omega}{v}$ is defined as the number of wave cycles per unit distance in the medium. Denoting by $k_0=\dfrac{\omega}{c}$ the wave number in vacuum, we get  
 that $n=\dfrac{k}{k_0}.$
The refractive index $n$ depends on both the medium and the propagating wave, see  \cite[Chapter 3]{HechtOptics} for a more in-depth interpretation of the involved parameters.

This section provides a brief review of the standard and the generalized Snell's laws and lays down the primary formulations needed for solving the inverse problems posed in this paper.
\subsection{The standard Snell's law}\label{sec:Standard Snell}
Let $\Gamma$ be a $C^1$ surface in $\mathbb R^3$ separating two homogeneous and isotropic media I and II. Upon striking $\Gamma$ at a point $P$, a light wave with angular frequency $\omega$ propagating along the unit direction ${\bf x}$ in I refracts along the unit direction ${\bf m}$ in II abiding the Snell's law of refraction
\begin{equation}\label{eq:vec Snell}
n_1 ({\bf x}\times \nu ) = n_2({\bf m} \times \nu) .\end{equation} $n_1$ and $n_2$ are respectively the refractive indices of media I and II corresponding to $\omega$, and $\nu$ the unit normal to $\Gamma$ at $P$ toward medium II, that is $x \cdot \nu \ge 0$.


Setting $\kappa = \dfrac{n_2}{n_1}$, \eqref{eq:vec Snell} yields the existence of $\lambda\in \mathbb R$ so that
\begin{equation} 
\label{eq: standard Snell's law}
     {\bf x} - \kappa {\bf m} = \lambda \nu.
\end{equation} 
In fact, from the calculations in \cite{GutHuanNear}
\begin{equation}\label{eq: lambda}
\lambda={\bf x}\cdot\nu-\sqrt{\kappa^2-1+({\bf x}\cdot\nu)^{2}}=\dfrac{1-\kappa^2}{{\bf x}\cdot\nu+\sqrt{\kappa^2-1+({\bf x}\cdot\nu)^2}}.
\end{equation}


Notice the following
\begin{itemize}
    \item  If $\kappa>1$, the term under the square root in \eqref{eq: lambda} is always positive, and $\lambda<0$. In this case, refraction into medium \text{II} occurs for all incident directions. 
    \item If $\kappa<1$, refraction occurs if and only if ${\bf x}\cdot \nu\geq \sqrt{1-\kappa^2}$, and for such incident directions $\lambda>0$.
\end{itemize}

The dot product ${\bf x\cdot m}$ corresponds to the deviation between the incident and the refracted directions. Dotting \eqref{eq: standard Snell's law} with ${\bf x}$, and using \eqref{eq: lambda} results in 
$${\bf x}\cdot {\bf m}=\dfrac{1}{\kappa}\left(1-\lambda \, {\bf x}\cdot \nu\right)=\dfrac{1}{\kappa}\left(1-\dfrac{(1-\kappa^2)({\bf x}\cdot \nu)}{{\bf x}\cdot \nu+\sqrt{\kappa^2-1+({\bf x}\cdot \nu)^2}}\right)=\dfrac{1}{\kappa}\left(1-(1-\kappa^2)\psi({\bf x}\cdot \nu)\right),$$
with $\psi(t)=\dfrac{t}{t+\sqrt{\kappa^2-1+t^2}}.$
Hence
\begin{itemize}
    \item For $\kappa>1$, $\psi$ is increasing for $t\in [0,1]$, so 
\begin{equation*}
{\bf x}\cdot {\bf m}\geq \dfrac{1}{\kappa}.
\end{equation*}
\item For $\kappa<1$, $\psi$ is decreasing for $t\in [\sqrt{1-\kappa^2},1],$ so
\begin{equation*}
{\bf x}\cdot {\bf m}\geq \kappa.
\end{equation*}
\end{itemize}

\subsection{The generalized Snell's law}\label{sec:Generalized Snell}
Denote by $(\Gamma,\phi)$ the metasurface with $\Gamma$ a $C^1$ surface separating media I and II and $\phi$ a $C^1$ function representing the phase discontinuity defined on a neighborhood of every point of $\Gamma$. 
An electromagnetic wave with frequency $\omega$
propagating in medium 
I with unit 
direction ${\bf x}$ is 
refracted by the 
metasurface $(\Gamma,\phi)$ at the point of incidence $P$
 along 
the unit direction ${\bf m}$ 
into II 
according to the 
generalized 
Snell's law of refraction
\begin{equation}\label{eq: generalized Snell law}
    n_1\left({\bf x} -\frac{1}{k}\nabla \phi\right) \times \nu = n_2{\bf m}\times \nu.
    \end{equation}
$n_1$ and $n_2$ are respectively the refractive indices of media I and II corresponding to $\omega$, $\nabla \phi$ denotes the gradient of $\phi$ at $P$, $k$ is the wave number corresponding to $\omega$ in medium I, and $\nu$ the unit normal at $P$ toward medium II. In this case, media I and II could be identical.

Letting $\kappa=\frac{n_2}{n_1}$, then from \eqref{eq: generalized Snell law} 
\begin{equation}
\label{eq:vectorial generalized Snell's law}
    {\bf x}-\frac{1}{k}\nabla \phi - \kappa {\bf m} = \mu \nu
\end{equation}
 with, from \cite{GUTIERREZ2021102134},
  \begin{equation} \label{equation of mu} \mu = \left({\bf x} - \frac{1}{k}\nabla\phi\right) \cdot \nu - \sqrt{\kappa^2- \left|{\bf x} -  \frac{1}{k}\nabla\phi\right|^2+\left[\left({\bf x} - \frac{1}{k}\nabla\phi\right) \cdot \nu\right]^2  }. \end{equation}
For refraction to occur, it is required that
\begin{equation}\label{eq:condition generalized}
\left[\left({\bf x} - \frac{1}{k}\nabla\phi\right) \cdot \nu\right]^2\geq \left|{\bf x}-\frac{1}{k}\nabla \phi\right|^2-\kappa^2.
\end{equation}

It's conventional in metalens design to have a tangential 
phase discontinuity \cite{LeeTangential}, 
that is, $\nabla \phi\cdot \nu=0$ at the point of incidence $P$. In this case,
\begin{equation}\label{eq: mu tangential}
\mu={\bf x}\cdot \nu-\sqrt{\kappa^2-\left|{\bf x}-\frac{1}{k}\nabla \phi\right|^2+({\bf x}\cdot \nu)^2},
\end{equation}
and the condition for refraction \eqref{eq:condition generalized} becomes
\begin{equation*}
({\bf x}\cdot \nu)^2\geq \left|{\bf x}-\frac{1}{k}\nabla \phi\right|^2-\kappa^2.
\end{equation*}

Notice that in the case of the absence of phase discontinuity, $\nabla \phi=0$, we recover the formulae for standard refracting surfaces obtained in Section \ref{sec:Standard Snell}.
 

\section{Uniform Refraction of a general incident field}\label{sec: uniform refraction}
\subsection{Problem setup}\label{subsec:Setup}
We are given an open and connected domain $\Omega\subseteq \mathbb R^2$, a $C^1$ unit vector field ${\bf e}(x)=(e_1(x),e_2(x),e_3(x)):=(e'(x),e_3(x))$ defined on $\Omega$, with $e'(x)=(e_1(x),e_2(x))$ and $e_3(x)>0$, and a $C^2$ conventional refracting surface $\sigma_1$ above the horizontal plane $\{x_3=0\}$, and below the plane $\{x_3=a\}$ with $a>0$. Denote by $n_1$ the refractive index of medium I below $\sigma_1$ and $n_2$ the refractive index of medium II between $\sigma_1$ and $\{x_3=a\}$.

 Monochromatic radiation with frequency $\omega$ are issued from $(x,0)$, $x=(x_1,x_2)\in \Omega$, with direction ${\bf e}(x)$ and strike $\sigma_1$ at the point $P(x)$. Let $\rho(x)=\left|P(x)-(x,0)\right|$ be the length of the trajectory traversed by the ray with direction ${\bf e}(x)$ in medium I. Assume that medium II is denser than medium I, i.e. $n_2>n_1$. Letting $\kappa_1=\dfrac{n_2}{n_1}$, $\kappa_1>1$, then from Subsection \ref{sec:Standard Snell}, refraction occurs at $P(x)$. The refracted ray propagates into medium II along the unit direction ${\bf m}(x)$ given by the Snell's law \eqref{eq: standard Snell's law}
\begin{equation}\label{eq: m inside II}
{\bf m}(x)=\dfrac{1}{\kappa_1}({\bf e}(x)-\lambda \nu):=(m_1(x),m_2(x),m_3(x))
\end{equation}
with $\nu$ the unit normal to $\sigma_1$ at $P(x)$ toward medium II, and 
$\lambda={\bf e}(x)\cdot \nu-\sqrt{\kappa_1^2-1+({\bf e}(x)\cdot \nu)^2}.$
Since $\kappa_1>1$, then from Subsection \ref{sec:Standard Snell}, $\lambda<0$. We assume that $\nu=(\nu_1,\nu_2,\nu_3)$ with $\nu_3>0$, then since $e_3(x)>0$ it follows from \eqref{eq: m inside II} that $m_3(x)>0$.
The refracted ray with direction ${\bf m}(x)$ strikes a flat horizontal metasurface $(\sigma_2,\phi)$, with $\sigma_2\subset\{x_3=a\}$, at the point $(Q(x),a):=(Q_1(x),Q_2(x),a)$. Let $d(x)=|(Q(x),a)-P(x)|$ be the length of the trajectory traversed by the ray with direction ${\bf m}(x)$ in medium II. Having that ${\bf m}$ is unit we parametrize $\sigma_2$ as follows
\begin{equation}\label{eq:parametrization of sigma2}
(Q(x),a)= P(x)+d(x){\bf m}(x)=(x,0)+\rho(x){\bf e}(x)+d(x){\bf m}(x).
\end{equation}
Equating the vertical components in \eqref{eq:parametrization of sigma2} yields the following formula for $d$
\begin{equation}\label{eq: formula for d general}
d(x)=\dfrac{a-\rho(x)e_3(x)}{m_3(x)}.
\end{equation}
Let $n_3$ be the refractive index of the medium above the plane $\{x_3=a\}$, referred to as medium III.
\begin{figure}[h]
\includegraphics[scale=0.6]{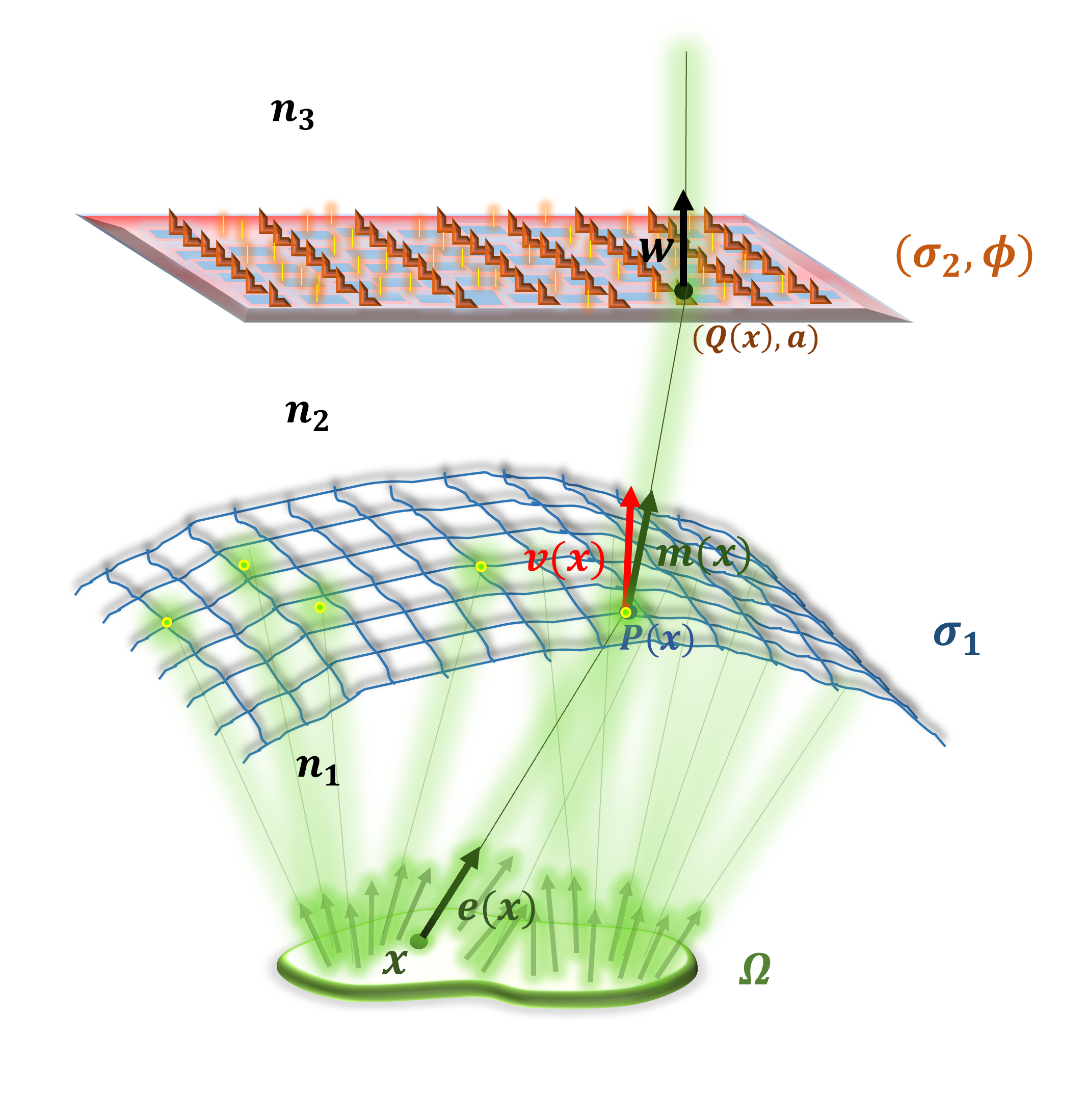}
\caption{}
\label{fig: general field}
\end{figure}

With the above setting, see Figure \ref{fig: general field}, our goal is to study the existence of a $C^1$ phase discontinuity $\phi:=\phi(u_1,u_2,u_3)$ defined on a neighborhood of every point of $\sigma_2$, so that at $(Q(x), a)$, the ray with direction ${\bf m}(x)$ is refracted into medium III along the vertical direction ${\bf w}=(0,0,1).$
We denote by $(\sigma_1,(\sigma_2,\phi))$ the lens with lower face the standard refracting surface $\sigma_1$ and upper face the planar metasurface $(\sigma_2,\phi)$.

\subsection{Necessary condition} \label{subs: Necessary condition } We start by proving a necessary condition for the existence of a solution to the problem introduced in Section \ref{subsec:Setup}.

\begin{proposition}\label{prop:necessary}
If a lens $(\sigma_1,(\sigma_2,\phi))$ refracting the $C^1$ field ${\bf e}(x)=(e'(x),e_3(x))$ into ${\bf w} = (0,0,1)$ exists then $e'=\nabla h$ for some $C^2$ function $h$, and so 
\begin{equation}\label{curl condition}
\nabla \times e'(x)=0
\end{equation}
\end{proposition}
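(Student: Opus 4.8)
The plan is to trace the optical path algebraically and extract a gradient structure for $e'$ from the two refraction laws. First I would write out the two Snell relations governing the lens. At $P(x)$ on $\sigma_1$, the standard law \eqref{eq: standard Snell's law} gives ${\bf e}(x)-\kappa_1{\bf m}(x)=\lambda(x)\nu(x)$, where $\nu$ is the unit normal to $\sigma_1$ at $P(x)$. At $(Q(x),a)$ on the flat metasurface $\sigma_2\subset\{x_3=a\}$, whose normal is the constant vector ${\bf w}=(0,0,1)$, the generalized Snell law \eqref{eq:vectorial generalized Snell's law} with exit direction ${\bf w}$ reads ${\bf m}(x)-\frac1k\nabla\phi(Q(x),a)-\kappa_2{\bf w}=\mu(x){\bf w}$ for some scalar $\mu(x)$; here $\kappa_2=n_3/n_2$. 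The key observation is that $\nabla\phi$ is tangential to $\sigma_2$ up to the $\nu$-component absorbed into the $\mu{\bf w}$ term, so comparing the \emph{horizontal} components gives ${\bf m}'(x)=\frac1k(\nabla\phi)'(Q(x),a)$, i.e. the horizontal part of ${\bf m}$ is, up to the factor $1/k$ and the change of variables $x\mapsto Q(x)$, the horizontal gradient of the potential $\phi$ restricted to the plane $\{x_3=a\}$.

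The second ingredient relates ${\bf m}'$ back to $e'$. From \eqref{eq: m inside II} we have ${\bf m}(x)=\frac1{\kappa_1}({\bf e}(x)-\lambda(x)\nu(x))$, so the horizontal components satisfy $\kappa_1{\bf m}'(x)=e'(x)-\lambda(x)\nu'(x)$. Now $\sigma_1$ is a graph over (a subset of) the plane, and the ray from $(x,0)$ with direction ${\bf e}(x)$ meets it at $P(x)=(x,0)+\rho(x){\bf e}(x)$; the refraction identity \eqref{eq: standard Snell's law}, together with the formula \eqref{eq: lambda} for $\lambda$, lets one express $\lambda\nu'$ in terms of ${\bf e}$ and the geometry of $\sigma_1$. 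The cleaner route, which I would pursue, is to introduce the optical path length (eikonal) $\Phi(x)=n_1\rho(x)+n_2 d(x)+n_3\,(\text{path in III})+\phi(Q(x),a)$ and argue via a Fermat/Hamilton-type variational principle that the total optical path from the source plane to the (collimated) target is, up to an additive constant, a function of $x$ whose horizontal gradient is $n_1 e'(x)$. Indeed, the contribution of the ray in medium I to the gradient of the optical path with respect to the emission point $(x,0)$ is $-n_1 e'(x)$ (the standard transversality/first-variation identity), and all downstream contributions combine—using the two Snell laws precisely as the stationarity conditions at $\sigma_1$ and $(\sigma_2,\phi)$—to a function that depends on $x$ only through the fixed exit direction ${\bf w}$, hence is constant in the relevant sense. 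Setting $h(x)=-\frac1{n_1}\Phi(x)+\text{const}$ then yields $e'(x)=\nabla h(x)$.

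Concretely, avoiding the variational formalism, here is the elementary computation I would do instead. From ${\bf m}(x)=(m'(x),m_3(x))$ unit and the metasurface relation, $m'(x)=\frac1k(\nabla\phi)'(Q(x),a)$. Since $\phi$ is $C^1$ on a neighborhood of the plane $\{x_3=a\}$, the field $p\mapsto (\nabla\phi)'(p,a)$ on $\sigma_2$ is itself a gradient (of $p\mapsto\phi(p,a)$), hence curl-free in the $Q$-variable: $\nabla_Q\times m'=0$. Then I would pull this back through the map $x\mapsto Q(x)$ and through the relation $\kappa_1 m'(x)=e'(x)-\lambda(x)\nu'(x)$, using \eqref{eq: formula for d general}, \eqref{eq:parametrization of sigma2} and the fact that $P$ lies on the fixed $C^2$ surface $\sigma_1$ (so $\nu$, $\lambda$, and the correction terms are all determined by $P(x)$ and are themselves gradient-compatible), to conclude that $e'$ is curl-free, i.e. \eqref{curl condition}, and therefore—since $\Omega$ can be taken simply connected (or the statement is local)—$e'=\nabla h$ for some $C^2$ function $h$ by the Poincaré lemma.

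The main obstacle I anticipate is the bookkeeping in the pull-back step: the change of variables $x\mapsto Q(x)$ is not a rigid motion, so $\nabla_Q\times m'=0$ does not immediately give $\nabla_x\times m'=0$, and one must carefully account for the Jacobian of $Q$ and for the $\lambda\nu'$ correction term coming from $\sigma_1$. The cleanest way to control this is precisely the optical-path (eikonal) argument, where the two Snell laws enter as the Euler–Lagrange stationarity conditions and automatically supply the needed cancellations; I expect the authors to organize the proof around such a potential function, differentiating the total optical length with respect to the source point and showing its horizontal gradient equals $n_1 e'(x)$ up to a constant.
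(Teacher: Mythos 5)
Your proposal is correct and takes essentially the same route as the paper: the authors carry out exactly the optical-path/potential argument you anticipate, setting $f(x)=\tfrac1k\phi(Q(x),a)$, using \eqref{m1,m2}, the chain rule, $|{\bf m}|=1$, and the tangency of $P_{x_i}$ to $\sigma_1$ (so that Snell's law gives $P_{x_i}\cdot{\bf m}=\tfrac1{\kappa_1}(e_i+\rho_{x_i})$) to obtain $e_i=(\kappa_1 f-\rho-\kappa_1 d)_{x_i}$, i.e. $e'=\nabla h$ with $h=\kappa_1 f-\rho-\kappa_1 d$, which is (up to sign and the factor $n_1$) your eikonal $\Phi$. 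The pull-back difficulty you flag in your ``elementary'' variant is precisely what is avoided by differentiating the composed potential $\phi(Q(x),a)$ rather than trying to transport curl-freeness through $x\mapsto Q(x)$.
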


\begin{proof}
Suppose that there exists a phase discontinuity $\phi:=\phi(u_1,u_2,u_3)$ defined on a neighborhood of every point of $\sigma_2$ so that the lens $(\sigma_1,(\sigma_2,\phi))$ refracts all the rays of direction ${\bf m}(x)$ given by \eqref{eq: m inside II} into the direction ${\bf w}.$ The generalized Snell's law \eqref{eq:vectorial generalized Snell's law} applied to the incident direction ${\bf m}(x)$, the refracted direction ${\bf w}=(0,0,1)$, and the normal $(0,0,1)$ to the horizontal metasurface $\sigma_2$ implies that
$${\bf m}(x)-\frac{1}{k}\nabla \phi(Q(x),a)-\kappa_2(0,0,1)=\mu(0,0,1)$$
with $\kappa_2=\dfrac{n_3}{n_2}$ and from \eqref{equation of mu} 
$$\mu=m_3(x)-\dfrac{1}{k}\phi_{u_3}(Q(x),a)-\sqrt{\kappa_2^2-\left|{\bf m}(x)-\dfrac{1}{k}\nabla \phi(Q(x),a)\right|^2+\left[m_3(x)-\frac{1}{k}\phi_{u_3}(Q(x),a)\right]^2}.$$ This is equivalent to the following system
\begin{equation}\label{m1,m2}
m_i(x)=\dfrac{1}{k}\phi_{u_i}(Q(x),a),\quad i=1,2.
\end{equation}

Let $f(x)=\frac{1}{k}\phi(Q(x),a)$, from the chain rule and \eqref{m1,m2} 
\begin{equation*} 
f_{x_i}(x)=\frac{1}{k}(Q_1)_{x_i}\phi_{u_1}(Q(x),a)+\frac{1}{k}(Q_2)_{x_i}\phi_{u_2}(Q(x),a)=(Q_{x_i},0)\cdot {\bf m} \end{equation*}
Recall that $|{\bf m}(x)|=1$. Differentiating \eqref{eq:parametrization of sigma2} with respect to $x_i$ and dotting with $\bf m$ we obtain
$$f_{x_i}(x)=
P_{x_i}(x)\cdot {\bf m}(x)+d_{x_i}(x).$$
From Snell's law \eqref{eq: standard Snell's law} at $P(x)=(x,0)+\rho(x){\bf e}(x)$, the vector ${\bf e}(x)-\kappa_1 {\bf m}(x)$ is parallel to the normal at $P$, and then since $|{\bf e}(x)|=1$
\begin{equation}\label{eq: m dot Pxi}
P_{x_i}(x)\cdot {\bf m}(x)=\dfrac{1}{\kappa_1}P_{x_i}(x)\cdot {\bf e}(x)=\dfrac{1}{\kappa_1}e_i(x)+\dfrac{1}{\kappa_1}\rho_{x_i}(x).
\end{equation}
We conclude that
$$f_{x_i}=\dfrac{1}{\kappa_1}e_{i}+\dfrac{1}{\kappa_1}\rho_{x_i}+d_{x_i}.$$
Hence $$e_i=\left(\kappa_1f-\rho-\kappa_1d\right)_{x_i}\qquad i=1,2.$$
Since $e_i\in C^1$, then $\kappa_1f-\rho-\kappa_1d\in C^2$. By the mixed derivative theorem, we obtain \eqref{curl condition}. 
\end{proof}

\begin{remark}\label{examples}\rm The necessary condition of Proposition \ref{prop:necessary} is satisfied for collimated incident fields, that is, ${\bf e}(x)=(e_1,e_2,e_3)$ is constant.

The curl condition is also satisfied when rays are emitted from a point source $R$ toward the surface $\sigma_1$ above $R$. In this case, we view $\Omega$ as the intersection of the rays with a virtual plane between $R$ and $\sigma_1$. With an appropriate choice of coordinates, the incident rays can be described by the field ${\bf e}(x)=\dfrac{(x,0)-R}{|(x,0)-R|}$. Here, $e'(x)=\nabla|(x,0)-R|$.
\end{remark}

\begin{remark}\rm
If $\phi$ solves \eqref{m1,m2}, then the internal reflection condition \eqref{eq:condition generalized} follows. In fact, for such $\phi$
$$\left|{\bf m}(x)-\dfrac{1}{k}\nabla\phi\right|^2-\kappa_2^2<\left|{\bf m}(x)-\dfrac{1}{k}\nabla\phi\right|^2=\left(m_3-\dfrac{1}{k}\phi_{u_3}\right)^2=\left[\left({\bf m}(x)-\frac{1}{k}\nabla \phi\right)\cdot (0,0,1)\right]^2.$$
\end{remark}

\subsection{Sufficient condition.}\label{subsec:sufficient condition} Given an incident field 
satisfying the necessary condition in Proposition \ref{prop:necessary}, and assuming that ${\bf e}$ and ${\bf m}$ are $C^2$, we use the implicit function theorem to find a sufficient condition for the existence of $\phi$ solving the system \eqref{m1,m2}. 
\begin{theorem}\label{thm:sufficient}
Suppose that the unit field ${\bf e}(x)=(e'(x),e_3(x))$ is in $C^2(\Omega)$ with $e'=\nabla h$ for some function $h$, and $e_3>0$. Assume, moreover, that ${\bf m}(x)$ given in \eqref{eq: m inside II} is in $C^2(\Omega)$. If 
{\footnotesize
\begin{equation}\label{eq:big det}
\det\left(D^2h+(1-\kappa_1{\bf e}\cdot {\bf m})D^2\rho-\kappa_1(D\rho\otimes ({\bf m}D{\bf e})+({\bf m}D{\bf e})\otimes D{\bf \rho})-\kappa_1\rho(D({\bf m}D{\bf e})-D{\bf e}\otimes D{\bf m})+\kappa_1 d D{\bf m}\otimes D{\bf m}\right)\neq 0
\end{equation}
}
at $x_0\in \Omega$ then there exists a neighborhood $U\subseteq \Omega$ of $x_0$ and a $C^1$ tangential phase discontinuity $\phi$ defined in a neighborhood of $(Q(x_0),a)\in \sigma_2$ such that for every $x\in U$, the field ${\bf e}(x)$ is refracted by the lens $(\sigma_1,(\sigma_2,\phi))$ into the vertical direction ${\bf w}=(0,0,1).$
\end{theorem}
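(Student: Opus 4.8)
The plan is to reduce the solvability of the system \eqref{m1,m2} to an application of the implicit function theorem by introducing the auxiliary vector field $f(x)=\tfrac1k\phi(Q(x),a)$ whose gradient we wish to prescribe. As established in the proof of Proposition \ref{prop:necessary}, if such a $\phi$ exists then $f_{x_i}=\tfrac1{\kappa_1}e_i+\tfrac1{\kappa_1}\rho_{x_i}+d_{x_i}$, or equivalently $\kappa_1 f=h+\rho+\kappa_1 d+\text{const}$ up to an additive constant, since $e'=\nabla h$. The point is that this identity actually goes the other way: if we \emph{define} a function $g$ on $\Omega$ by $g(x)=\tfrac1{\kappa_1}\bigl(h(x)+\rho(x)\bigr)+d(x)$, then $\nabla g = {\bf m}$ dotted with the relevant derivative data, and the genuine obstruction is whether $g$ can be realized as $\tfrac1k\phi\circ(Q,a)$ for some $\phi$ of a single space variable triple $(u_1,u_2,u_3)$ evaluated on the surface $\sigma_2\subset\{x_3=a\}$; equivalently whether the map $x\mapsto Q(x)=(Q_1(x),Q_2(x))$ is a local diffeomorphism near $x_0$, so that $\phi(u_1,u_2,a):=k\,g(Q^{-1}(u_1,u_2))$ is well defined and $C^1$, and then extended to a neighborhood in $\mathbb R^3$ tangentially (i.e. independent of $u_3$, which forces $\nabla\phi\cdot\nu=0$ on $\sigma_2$ and makes it a tangential phase discontinuity).

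So the heart of the matter is to compute $DQ$ and show $\det DQ(x_0)\neq 0$ is \emph{equivalent} to the determinant condition \eqref{eq:big det}. First I would differentiate the parametrization \eqref{eq:parametrization of sigma2}, namely $(Q(x),a)=(x,0)+\rho(x){\bf e}(x)+d(x){\bf m}(x)$, with respect to $x$ to get a $3\times 2$ matrix identity
\begin{equation*}
(DQ,0)=[I_{2\times 2};0]+{\bf e}\otimes D\rho+\rho\,D{\bf e}+{\bf m}\otimes Dd+d\,D{\bf m}.
\end{equation*}
Next I would eliminate $Dd$. The vertical-component relation \eqref{eq: formula for d general} gives a formula for $d$, but it is cleaner to use $f_{x_i}=P_{x_i}\cdot{\bf m}+d_{x_i}$ together with $P_{x_i}\cdot{\bf m}=\tfrac1{\kappa_1}(e_i+\rho_{x_i})$ from \eqref{eq: m dot Pxi}, and the fact (to be extracted from $\kappa_1 f - \rho - \kappa_1 d = h + \text{const}$, which holds because $e_i=\partial_{x_i}(\kappa_1 f-\rho-\kappa_1 d)$ and $e'=\nabla h$) that $\nabla f = \tfrac1{\kappa_1}\nabla h+\tfrac1{\kappa_1}\nabla\rho+\nabla d$; solving, $Dd=\tfrac1{\kappa_1}(D^2h+D^2\rho)-$(the rest). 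Actually the most systematic route is: take the identity for $(DQ,0)$, dot each column appropriately, and use that ${\bf m}$ is unit so $D{\bf m}\,{\bf m}^t=0$; the $u_3$-component equation is automatically consistent, and the $2\times 2$ block $DQ$ then factors as $DQ = (\text{invertible normalization involving } m_3)\cdot M$ where $M$ is precisely the matrix inside the determinant in \eqref{eq:big det}. I would push the $D{\bf e}$ terms around using $e'=\nabla h$ so that $D e' = D^2 h$ is symmetric, which is what allows the messy terms $-\kappa_1(D\rho\otimes({\bf m}D{\bf e})+({\bf m}D{\bf e})\otimes D\rho)$ and $-\kappa_1\rho(D({\bf m}D{\bf e})-D{\bf e}\otimes D{\bf m})$ to assemble cleanly; the $\kappa_1 d\,D{\bf m}\otimes D{\bf m}$ term comes from the $d\,D{\bf m}$ contribution after projecting out the vertical direction.

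\textbf{Main obstacle.} The bookkeeping in the previous paragraph is where essentially all the work lies: correctly differentiating the three-fold composition, eliminating $Dd$ and $\rho$-derivatives in terms of $D^2h$, $D^2\rho$, $D{\bf e}$, $D{\bf m}$, and verifying that the resulting $2\times 2$ matrix is exactly \eqref{eq:big det} up to an invertible scalar/diagonal factor (so that its non-vanishing is equivalent to $\det DQ(x_0)\neq 0$). Once $Q$ is a local $C^1$ diffeomorphism near $x_0$ by the inverse function theorem, defining $\phi(u_1,u_2,u_3):=k\,g\bigl(Q^{-1}(u_1,u_2)\bigr)$ on $U\times\mathbb R$ (constant in $u_3$) gives a $C^1$ tangential phase discontinuity, and one checks directly that \eqref{m1,m2} holds — hence, by the computation preceding \eqref{m1,m2}, the generalized Snell law \eqref{eq:vectorial generalized Snell's law} at each $(Q(x),a)$ refracts ${\bf m}(x)$ into ${\bf w}=(0,0,1)$ — and that the refraction condition \eqref{eq:condition generalized} is satisfied by the Remark following Proposition \ref{prop:necessary}. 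The only subtlety beyond the determinant computation is to confirm regularity: ${\bf e},{\bf m}\in C^2$ force $\rho\in C^2$ (from Snell's law / the defining relation for $\sigma_1$) and $d\in C^1$, hence $g\in C^1$ and $\phi\in C^1$, which is all that is claimed.
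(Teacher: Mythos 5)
Your overall route is sound and genuinely different from the paper's. The paper never inverts $Q$ directly: it sets $f=\tfrac1{\kappa_1}(h+\rho)+d$, $H=(Q,f)$, $N=(m_1,m_2,-1)$, forms $F(u,x)=(u-H(x))\cdot N(x)$, and applies the implicit function theorem to the envelope system $F=F_{x_1}=F_{x_2}=0$ in the variables $(u_3,x_1,x_2)$, reading off $\phi=k g_1$ from the resulting implicit functions; condition \eqref{eq:big det} is exactly the non-vanishing of that Jacobian. Your construction --- the same $f$, the observation $f_{x_i}=Q_{x_i}\cdot(m_1,m_2)$ coming from \eqref{eq:parametrization of sigma2} and \eqref{eq: m dot Pxi}, then $\phi(u_1,u_2,u_3):=k\,f\bigl(Q^{-1}(u_1,u_2)\bigr)$ --- does verify \eqref{m1,m2} by the chain rule, is automatically tangential, and is if anything more transparent; the regularity bookkeeping ($\rho\in C^2$ by hypothesis on $\sigma_1$, hence $d,Q,f\in C^2$ and $\phi\in C^1$) is fine.

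However, the central algebraic claim you rest on is false, and the computation you outline would not close. Writing $m'=(m_1,m_2)$, the matrix $M$ inside \eqref{eq:big det} has entries $M_{ij}=\kappa_1\,(Q,a)_{x_i}\cdot{\bf m}_{x_j}=\kappa_1\,Q_{x_i}\cdot m'_{x_j}$, i.e. $M=\kappa_1\,(DQ)^{t}\,Dm'$ and $\det M=\kappa_1^{2}\,\det DQ\,\det Dm'$. So \eqref{eq:big det} is \emph{not} equivalent to $\det DQ(x_0)\neq 0$, and the factorization $DQ=(\text{invertible normalization})\cdot M$ you propose does not exist: take ${\bf e}$ constant and $\sigma_1$ planar, so that ${\bf m}$ is constant, $Dm'=0$ and \eqref{eq:big det} fails, while $Q$ is an affine bijection. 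What survives is precisely the implication you need: \eqref{eq:big det} forces $\det DQ(x_0)\neq 0$, after which your construction goes through (and in fact shows the hypothesis is stronger than your argument requires). To repair the proof, replace the claimed equivalence by the identity $M=\kappa_1(DQ)^{t}Dm'$; establishing it still requires expanding $P_{x_i}\cdot{\bf m}_{x_j}+d\,{\bf m}_{x_i}\cdot{\bf m}_{x_j}$ into the terms of \eqref{eq:big det}, which is exactly the bookkeeping the paper carries out --- but it lands on a product of two Jacobians, not on $DQ$ alone.
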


\begin{proof}

Since $\rho$ and ${\bf e}$ are in $C^2(\Omega)$ then so is $P$. Also, from \eqref{eq: formula for d general} and the facts that ${\bf m}\in C^2$ and $m_3>0$ it follows that $d\in C^2(\Omega)$, and therefore $Q\in C^2(\Omega)$.

From \eqref{eq: m dot Pxi}, for $i=1,2$
$$(Q_{x_i}(x),0)\cdot {\bf m}(x)= (Q(x),a)_{x_i}\cdot{\bf m}(x)=P_{x_i}(x)\cdot {\bf m}(x)+d_{x_i}(x)=\dfrac{1}{\kappa_1}h_{x_i}(x)+\dfrac{1}{\kappa_1}\rho_{x_i}(x)+d_{x_i}(x).$$
Define the following maps in $C^2(\Omega)$
$$ f(x)=\dfrac{1}{\kappa_1}h(x)+\dfrac{1}{\kappa_1}\rho(x)+d(x),\qquad H(x)=(Q(x),f(x)), \qquad N(x)=(m_1(x),m_2(x),-1).$$
Notice that for $i=1,2$
\begin{equation}\label{eq:useful}
    H_{x_i}\cdot N=Q_{x_i}\cdot (m_1,m_2)-f_{x_i}=(Q_{x_i},0)\cdot {\bf m}-f_{x_i}=0.
\end{equation}

For $u=(u_1,u_2,u_3)\in \mathbb R^3$, and $x\in \Omega$, define the $C^2$ function
$F(u,x)=(u-H(x))\cdot N(x),$
and the $C^1$ map $G: \mathbb R^3\times \Omega\mapsto \mathbb R^3$ given by $G(u,x)=(F(u,x),F_{x_1}(u,x),F_{x_2}(u,x)).$ We consider the system
\begin{equation}\label{eq:system for G}
G(u,x)=(0,0,0).
\end{equation}
Fix $x_0\in \Omega$. From the formula of $F$ and \eqref{eq:useful}, $(H(x_0),x_0)=(Q(x_0),f(x_0),x_0)$ solves \eqref{eq:system for G}. Assume that
\begin{equation}\label{eq:non-zero Jacobian}
\det\left(\dfrac{\partial G}{\partial(u_3,x_1,x_2)}(H(x_0),x_0)\right)\neq 0,
\end{equation} 
then by the implicit function theorem there exists an open neighborhood $O\subseteq \mathbb R^2$ of $Q(x_0)$, an open neighborhood $W\subseteq \mathbb R\times \Omega$ of $(f(x_0),x_0)$, and unique functions $g_1,g_2,g_3\in C^1(O)$ such that for $(u_1,u_2)\in O$
\begin{equation}\label{eq:system in U}
G(u_1,u_2, g_1(u_1,u_2),g_2(u_1,u_2),g_3(u_1,u_2))=(0,0,0).
\end{equation}

Since $f$ and $Q$ are continuous then there exists a neighborhood $U\subseteq \Omega$ of $x_0$ such that for each $x$ in that neighborhood $(f(x),x)\in W$, and $Q(x)\in O$. 
For such $x$, $(H(x),x)=(Q(x),f(x),x)$ solves \eqref{eq:system for G}. Therefore, by the uniqueness of $g_1,g_2,$ and $g_3$,  it follows that for every $x\in U$, 
\begin{equation}\label{eq: g1g2g3}
g_1(Q(x))=f(x), g_2(Q(x))=x_1, g_3(Q(x))=x_2.
\end{equation}

We prove that the function $\phi: O\times \mathbb R\mapsto \mathbb R$ defined as follows
$$\phi(u_1,u_2,u_3)=k g_1(u_1,u_2),$$
is our desired phase discontinuity. It's obvious that $\phi$ is tangential to the metasurface since $\nabla \phi\cdot (0,0,1)=k(g_1)_{u_3}=0$. It remains to show that $\phi$ solves the system \eqref{m1,m2} for every $x\in U$. From the formula of $G$ and \eqref{eq:system in U},  
$F(u_1,u_2,g_1(u_1,u_2),g_2(u_1,u_2),g_3(u_1,u_2))=0$, for every $(u_1,u_2)\in O$. Differentiating with respect to $u_i$, $i=1,2$, and using \eqref{eq:system in U} again, yields
$$0=F_{u_i}+F_{u_3} (g_1)_{u_i}+F_{x_1}(g_2)_{u_i}+F_{x_2}(g_3)_{u_i}=F_{u_i}+F_{u_3}(g_1)_{u_i}.$$
Particularly, for $(u_1,u_2)=Q(x)$, $x\in U$, from \eqref{eq: g1g2g3}
$$0=F_{u_i}(Q(x),f(x),x)+F_{u_3}(Q(x),f(x),x)\,(g_1)_{u_i}(Q(x))=m_i(x)-\frac{1}{k}\phi_{u_i}(Q(x),a),$$
concluding that $\phi$ satisfies the system \eqref{m1,m2}.


It remains to simplify the expression of the Jacobian determinant in \eqref{eq:non-zero Jacobian}. From \eqref{eq:useful},
$$\det\left(\dfrac{\partial G}{\partial (u_3,x_1,x_2)}(H(x_0),x_0)\right)=\begin{vmatrix} F_{u_3} & F_{x_1} & F_{x_2}\\ F_{x_1u_3} & F_{x_1x_1} & F_{x_1x_2} \\ F_{x_2 u_3} & F_{x_2x_1} & F_{x_2x_2}\end{vmatrix}(H(x_0),x_0)=\begin{vmatrix} -1 & 0 & 0 \\ 0 & -H_{x_1}\cdot N_{x_1} & -H_{x_2}\cdot N_{x_1} \\ 0 & -H_{x_1}\cdot N_{x_2} & -H_{x_2}\cdot N_{x_2}\end{vmatrix}(x_0).$$
Hence \eqref{eq:non-zero Jacobian} is equivalent to 
$\det( (H_{x_i}\cdot N_{x_j})_{i,j=1,2})\neq 0$ at $x_0$.

For $i,j=1,2$
\[    H_{x_i}\cdot N_{x_j}=(Q_{x_i},f_{x_i})\cdot( (m_1)_{x_j}, (m_2)_{x_j}, 0)\\
    =(Q,a)_{x_i}\cdot {\bf m}_{x_j}\\
    =(P+d\,{\bf m})_{x_i}\cdot {\bf m}_{x_j}\\
    = P_{x_i}\cdot {\bf m}_{x_j}+d {\bf m}_{x_i}\cdot {\bf m}_{x_j}.\]
From \eqref{eq: m dot Pxi}, and the fact that $e'=\nabla h$
\begin{align*}
    P_{x_i}\cdot {\bf m}_{x_j}=(P_{x_i}\cdot {\bf m})_{x_j}-P_{x_ix_j}\cdot {\bf m}&=\dfrac{1}{\kappa_1}h_{x_ix_j}+\dfrac{1}{\kappa_1}\rho_{x_ix_j}-(\rho{\bf e})_{x_ix_j}\cdot {\bf m}\\
    &=\dfrac{1}{\kappa_1}h_{x_i x_j} +\left(\dfrac{1}{\kappa_1}-{\bf e}\cdot {\bf m}\right) \rho_{x_ix_j}-(\rho_{x_i}{\bf e}_{x_j}+\rho_{x_j}{\bf e}_{x_i})\cdot {\bf m}-\rho (({\bf e}_{x_i}\cdot {\bf m})_{x_j}-{\bf e}_{x_i}\cdot {\bf m}_{x_j})
\end{align*}
Concluding that 
$$
(H_{x_i}\cdot N_{x_j})_{i,j=1,2}=\dfrac{1}{\kappa_1}D^2h+\left(\dfrac{1}{\kappa_1}-{\bf e}\cdot {\bf m}\right)D^2\rho-(D\rho\otimes ({\bf m}D{\bf e})+({\bf m}D{\bf e})\otimes D{\bf \rho})-\rho(D({\bf m}D{\bf e})-D{\bf e}\otimes D{\bf m})+d D{\bf m}\otimes D{\bf m}.$$
  Multiplying above by $\kappa_1$, we obtain the sufficient condition \eqref{eq:big det}.
\end{proof}

\begin{remark}\label{rmk:collimated}\rm
 Assume that the field ${\bf e}=(e_1,e_2,e_3)$ is constant, with $e_3>0$. From Remark \ref{examples}, the necessary condition is satisfied. The sufficient condition \eqref{eq:big det}
reduces to 
\begin{equation}\label{eq:det for collimated}
\det \left((1-\kappa_1{\bf e}\cdot {\bf m})D^2\rho+\kappa_1 d\, D{\bf m}\otimes D{\bf m}\right)\neq 0
\end{equation}
at $x_0\in \Omega$.

In this case, if ${\bf e}\cdot {\bf \nu}>0$ and $D^2\rho$ is negative definite then \eqref{eq:det for collimated} follows at every point. In fact, from Section \ref{sec:Standard Snell}, since $\kappa_1>1$ then ${\bf e}\cdot {\bf m}\geq \dfrac{1}{\kappa_1}$ with equality if and only if ${\bf e}\cdot {\nu}=0$. Therefore if ${\bf e}\cdot \nu>0$ and $D^2\rho<0$, then the matrix $(1-\kappa_1{\bf e}\cdot{\bf m})D^2\rho$ is positive definite. On the other hand, $d>0$ and $D{\bf m}\otimes D{\bf m}$ is positive semi-definite, hence our claim follows.

From Theorem \ref{thm:sufficient}, for all such surfaces $\sigma_1$, for every $x_0\in \Omega$ there exists a $C^1$ phase discontinuity $\phi$ in a neighborhood of $(Q(x_0),a)\in \sigma_2$ so that the collimated constant field ${\bf e}$ emitted from $(x,0)$ with $x$ in a neighborhood of $x_0$, is refracted by the lens $(\sigma_1,(\sigma_2,\phi))$ into the vertical direction ${\bf w}=(0,0,1)$. 

\end{remark}

\subsection{The case of a vertical incident field.} \label{subsec:vertical}
We assume in this section that the incident field emitted from $\Omega$ is vertical, i.e. ${\bf e}(x)={\bf w}=(0,0,1)$ for every $x\in \Omega$. In this case, $P(x)=(x,\rho(x))$, and so $\sigma_1$ is the graph of the function $\rho$. The normal vector at each point $P(x)$ is then given by $\nu(x)=\dfrac{(-D\rho(x),1)}{\sqrt{1+|D\rho(x)|^2}}.$

We simplify the formulas for ${\bf m}$ and $d$ given respectively in \eqref{eq: m inside II} and \eqref{eq: formula for d general}. Denoting
\begin{equation}\label{eq: Delta}
\Delta(x)=\sqrt{\kappa_1^2+(\kappa_1^2-1)|D\rho(x)|^2},
\end{equation}
\eqref{eq: lambda} yields
$$\lambda=\dfrac{1-\kappa_1^2}{(0,0,1)\cdot {\bf \nu}+\sqrt{\kappa_1^2-1+((0,0,1)\cdot \nu)^2}}=\dfrac{(1-\kappa_1^2)\sqrt{1+|D\rho|^2}}{1+\Delta},$$
and so replacing in \eqref{eq: m inside II}

\begin{equation}\label{eq:m for vertical field}
{\bf m}=\dfrac{1}{\kappa_1}\left((0,0,1)-\dfrac{1-\kappa_1^2}{1+\Delta}(-D\rho,1)\right)=\dfrac{1}{\kappa_1}\left(\dfrac{1-\kappa_1^2}{1+\Delta}D\rho, 1+\dfrac{\kappa_1^2-1}{1+\Delta}\right).
\end{equation}
Therefore from \eqref{eq: formula for d general}
\begin{equation}\label{eq: d for vertical field}
d(x)=\dfrac{\kappa_1(a-\rho(x))(1+\Delta(x))}{\kappa_1^2+\Delta(x)}.
\end{equation}

Since, in Theorem \ref{thm:sufficient}, we need ${\bf m}$ to be $C^2$, we assume that $\rho$ is $C^3$ and prove the following theorem.
\begin{theorem}\label{thm:sufficient for vertical field}
Given $\rho\in C^3(\Omega)$, and ${\bf e}(x)=(0,0,1),$ then condition \eqref{eq:big det} is satisfied at $x_0$ if and only if $\det D^2\rho\neq 0$ and
\begin{equation}\label{eq: det vertical}
\det \left(I+\dfrac{\kappa_1^2-1}{\kappa_1^2}D\rho\otimes D\rho+\dfrac{(a-\rho)(1-\kappa_1^2)}{\kappa_1^2+\Delta}D^2\rho\right)\neq 0
\end{equation}
at $x_0$.
\end{theorem}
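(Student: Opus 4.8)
The plan is to specialize the general determinant condition \eqref{eq:big det} to the vertical field case ${\bf e}=(0,0,1)$ and show that the big matrix factors, up to sign and invertible matrix factors, into $D^2\rho$ times the matrix in \eqref{eq: det vertical}. First I would substitute $h$: since ${\bf e}=(0,0,1)=\nabla h$ with $h$ constant, all terms involving $D^2h$ vanish, and since ${\bf e}$ is constant, $D{\bf e}=0$. Thus every term in \eqref{eq:big det} containing $D{\bf e}$ or ${\bf m}D{\bf e}$ drops out, leaving $(1-\kappa_1{\bf e}\cdot{\bf m})D^2\rho + \kappa_1\rho\,(D{\bf e}\otimes D{\bf m})\big|_{D{\bf e}=0} + \kappa_1 d\,D{\bf m}\otimes D{\bf m}$, i.e.\ exactly the reduced form \eqref{eq:det for collimated} from Remark \ref{rmk:collimated}:
\[
\det\left((1-\kappa_1{\bf e}\cdot{\bf m})D^2\rho+\kappa_1 d\,D{\bf m}\otimes D{\bf m}\right)\neq 0.
\]

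Next I would compute the three scalar/matrix ingredients explicitly from the vertical-field formulas. From \eqref{eq:m for vertical field}, dotting with ${\bf e}=(0,0,1)$ gives ${\bf e}\cdot{\bf m}=\frac{1}{\kappa_1}\bigl(1+\frac{\kappa_1^2-1}{1+\Delta}\bigr)=\frac{1}{\kappa_1}\cdot\frac{\kappa_1^2+\Delta}{1+\Delta}$, so $1-\kappa_1{\bf e}\cdot{\bf m}=1-\frac{\kappa_1^2+\Delta}{1+\Delta}=\frac{1-\kappa_1^2}{1+\Delta}$, which is nonzero since $\kappa_1>1$. Combined with \eqref{eq: d for vertical field}, $\kappa_1 d=\frac{\kappa_1^2(a-\rho)(1+\Delta)}{\kappa_1^2+\Delta}$. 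The delicate computation is $D{\bf m}$: differentiating \eqref{eq:m for vertical field} one gets the $3\times 2$ matrix $D{\bf m}$, whose rows depend on $D\rho$, $D^2\rho$, and $D\Delta$ (and $D\Delta$ itself is a multiple of $(D^2\rho)D\rho^t$ by \eqref{eq: Delta}). The key structural fact I would extract is that $D{\bf m}$ is, up to an invertible scalar factor and a rank-one correction, given by $(D^2\rho)$ composed with a $2\times 3$ matrix — concretely something like $D{\bf m}=\frac{1-\kappa_1^2}{\kappa_1(1+\Delta)}(D^2\rho)\,M$ for a suitable $2\times 3$ matrix $M$ of full rank $2$ plus lower-order rank-one terms involving $D\rho\otimes D\rho$. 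Then $D{\bf m}\otimes D{\bf m}=M^t(D^2\rho)^t(\cdots)(D^2\rho)M$-type expressions, and pulling $D^2\rho$ out on both sides of the $2\times 2$ matrix $(1-\kappa_1{\bf e}\cdot{\bf m})D^2\rho+\kappa_1 d\,D{\bf m}\otimes D{\bf m}$ yields a factorization $D^2\rho\cdot\bigl(\text{stuff}\bigr)\cdot D^2\rho$ or $D^2\rho\cdot\bigl(\text{stuff}\bigr)$.

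This factorization is exactly where the ``if and only if $\det D^2\rho\neq 0$'' appears: if the reduced matrix equals $(D^2\rho)\,A$ with $A$ the matrix in \eqref{eq: det vertical} times an invertible scalar, then $\det\bigl((1-\kappa_1{\bf e}\cdot{\bf m})D^2\rho+\kappa_1 d\,D{\bf m}\otimes D{\bf m}\bigr)= (\det D^2\rho)(\det A)\cdot(\text{positive constant})$, so this determinant is nonzero iff both $\det D^2\rho\neq 0$ and $\det A\neq 0$, and the latter is \eqref{eq: det vertical}. To pin down $A$, I would factor the scalar $\frac{1-\kappa_1^2}{1+\Delta}$ out of the whole $2\times 2$ matrix, writing it as $\frac{1-\kappa_1^2}{1+\Delta}\bigl(D^2\rho+\frac{\kappa_1 d(1+\Delta)}{1-\kappa_1^2}D{\bf m}\otimes D{\bf m}\bigr)$, then use $\frac{\kappa_1 d(1+\Delta)}{1-\kappa_1^2}=\frac{\kappa_1^2(a-\rho)(1+\Delta)^2}{(\kappa_1^2+\Delta)(1-\kappa_1^2)}$ together with the explicit $D{\bf m}$ to see that $D{\bf m}\otimes D{\bf m}$ contributes a term $\propto (D^2\rho+c\,D\rho\otimes D\rho)^t(D^2\rho+c\,D\rho\otimes D\rho)$; absorbing one factor of $D^2\rho$ from each side (legitimate for the determinant identity up to the factor $\det D^2\rho$, and clean when $D^2\rho$ is symmetric) leaves precisely $I+\frac{\kappa_1^2-1}{\kappa_1^2}D\rho\otimes D\rho+\frac{(a-\rho)(1-\kappa_1^2)}{\kappa_1^2+\Delta}D^2\rho$ after simplifying the rational coefficients (using $\Delta^2=\kappa_1^2+(\kappa_1^2-1)|D\rho|^2$ to collapse terms). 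The main obstacle is the bookkeeping in computing $D{\bf m}$ and verifying that the rank-one $D\rho\otimes D\rho$ pieces combine with the coefficients to match \eqref{eq: det vertical} exactly; I expect to use the identity $\Delta\,D\Delta=(\kappa_1^2-1)(D^2\rho)D\rho^t$ repeatedly and to handle the rank-one terms via the matrix determinant lemma or by direct $2\times2$ expansion, and to argue the ``only if'' direction by noting that if $\det D^2\rho=0$ then the reduced matrix, being of the form $(D^2\rho)\cdot(\cdots)$ after the factorization, is singular, so \eqref{eq:big det} fails.
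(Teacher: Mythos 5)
Your proposal follows essentially the same route as the paper: reduce to the collimated condition \eqref{eq:det for collimated}, compute $1-\kappa_1{\bf e}\cdot{\bf m}=\frac{1-\kappa_1^2}{1+\Delta}$ and $D{\bf m}\otimes D{\bf m}=\frac{(\kappa_1^2-1)^2}{\kappa_1^2(1+\Delta)^2}\,D^2\rho\,\mathcal W\,D^2\rho$ with $\mathcal W=I-\frac{\kappa_1^2-1}{\Delta^2}D\rho\otimes D\rho$, pull a factor of $D^2\rho$ out of the resulting matrix, and identify what remains via the matrix determinant lemma and Sherman--Morrison as an invertible multiple of the matrix in \eqref{eq: det vertical}. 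The only point to tighten is that the factorization is one-sided (the reduced matrix is $D^2\rho$ times an invertible multiple of $\mathcal W$ times the matrix in \eqref{eq: det vertical}, not $D^2\rho(\cdots)D^2\rho$, since the term $(1-\kappa_1{\bf e}\cdot{\bf m})D^2\rho$ contributes $D^2\rho$ only once); your final expression for $A$, which contains $D^2\rho$ linearly, is consistent with this.
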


\begin{proof} The objective is to simplify \eqref{eq:det for collimated}. First, from \eqref{eq:m for vertical field}
\begin{equation}\label{eq:1-kappam3}
1-\kappa_1  {\bf e}\cdot {\bf m}=1-\kappa_1m_3=\dfrac{1-\kappa_1^2}{1+\Delta}.
\end{equation}
Second, differentiating \eqref{eq:m for vertical field} with respect to $x_i$, $i=1,2$ yields
$${\bf m}_{x_i}=\dfrac{1-\kappa_1^2}{\kappa_1(1+\Delta)}\left((D\rho_{x_i},0)+\dfrac{\Delta_{x_i}}{1+\Delta}(-D\rho,1)\right).$$
From \eqref{eq: Delta}, $\Delta_{x_i}=\dfrac{(\kappa_1^2-1)}{\Delta}D\rho_{x_i}\cdot D\rho,$
and so
\[
{\bf m}_{x_i}=\dfrac{1-\kappa_1^2}{\kappa_1(1+\Delta)}\left[(D\rho_{x_i},0)+\dfrac{(\kappa_1^2-1)D\rho_{x_i}\cdot D\rho}{\Delta(1+\Delta)}(-D\rho,1)\right]
.\]
For $i,j=1,2$
\begin{align*}
    {\bf m}_{x_i}\cdot {\bf m}_{x_j}&=\dfrac{(\kappa_1^2-1)^2}{\kappa_1^2(1+\Delta)^2}\Big[D\rho_{x_i}\cdot D\rho_{x_j}-2\dfrac{(\kappa_1^2-1)}{\Delta(1+\Delta)}(D\rho_{x_i}\cdot D\rho)(D\rho_{x_j}\cdot D\rho)\\ &\qquad \qquad\qquad\qquad\qquad+\dfrac{(\kappa_1^2-1)^2(D\rho_{x_i}\cdot D\rho)(D\rho_{x_j}\cdot D\rho)}{\Delta^2(1+\Delta)^2}(1+|D\rho|^2)\Big]\\
    &=\dfrac{(\kappa_1^2-1)^2}{\kappa_1^2(1+\Delta)^2}\left[D\rho_{x_i}\cdot D\rho_{x_j}+\dfrac{\kappa_1^2-1}{\Delta(1+\Delta)}(D\rho_{x_i}\cdot D\rho)(D\rho_{x_j}\cdot D\rho)\left(\dfrac{(\kappa_1^2-1)(1+|D\rho|^2)}{\Delta(1+\Delta)}-2\right)\right]
\end{align*}
From \eqref{eq: Delta}, $(\kappa_1^2-1)|D\rho|^2=\Delta^2-\kappa_1^2$, and hence
$${\bf m}_{x_i}\cdot {\bf m}_{x_j}=\dfrac{(\kappa_1^2-1)^2}{\kappa_1^2(1+\Delta)^2}\left[D\rho_{x_i}\cdot D\rho_{x_j}-\dfrac{\kappa_1^2-1}{\Delta^2}(D\rho_{x_i}\cdot D\rho)(D\rho_{x_j}\cdot D\rho)\right].$$
Therefore
\begin{equation}\label{Dm times Dm}
D{\bf m}\otimes D{\bf m}=\dfrac{(\kappa_1^2-1)^2}{\kappa_1^2(1+\Delta)^2}\left[(D^2\rho)^2-\dfrac{\kappa_1^2-1}{\Delta^2}D^2\rho(D\rho\otimes D\rho)D^2\rho\right]=\dfrac{(\kappa_1^2-1)^2}{\kappa_1^2(1+\Delta)^2}D^2\rho\left[I-\dfrac{\kappa_1^2-1}{\Delta^2}D\rho\otimes D\rho\right]D^2\rho
\end{equation}

Replacing \eqref{eq: d for vertical field}, \eqref{eq:1-kappam3}, and \eqref{Dm times Dm}  in \eqref{eq:det for collimated}, the sufficient condition becomes
$$ 0\neq \left(\det D^2\rho\right)\det\left(\dfrac{1-\kappa_1^2}{1+\Delta}I+ \dfrac{(\kappa_1^2-1)^2(a-\rho)}{(1+\Delta)(\kappa_1^2+\Delta)}\left[I-\dfrac{\kappa_1^2-1}{\Delta^2}D\rho\otimes D\rho\right]D^2\rho\right).$$
Let
$$\mathcal M=\dfrac{1-\kappa_1^2}{1+\Delta}I+ \dfrac{(\kappa_1^2-1)^2(a-\rho)}{(1+\Delta)(\kappa_1^2+\Delta)}\left[I-\dfrac{\kappa_1^2-1}{\Delta^2}D\rho\otimes D\rho\right]D^2\rho,$$
then \eqref{eq:det for collimated} is satisfied for ${\bf e}=(0,0,1)$ if and only if at $x=x_0$, $D^2\rho$ and $\mathcal M$ are invertible. 

We simplify $\det \mathcal M$.
Let $\mathcal W=I-\dfrac{\kappa_1^2-1}{\Delta^2}D\rho\otimes D\rho$, the matrix determinant Lemma \cite[Lemma 1.1]{DING20071223} and \eqref{eq: Delta} imply that
\begin{align*}
    \det \mathcal W=1-\dfrac{(\kappa_1^2-1)|D\rho|^2}{\Delta^2}=\dfrac{\kappa_1^2}{\Delta^2},
\end{align*}
then $\mathcal W$ is invertible and by the Sherman-Morrison formula \cite[Chapter III.1]{strang2019linear}
$$\mathcal W^{-1}=I+\dfrac{\kappa_1^2-1}{\kappa_1^2}D\rho\otimes D\rho.$$
Therefore,
\begin{align*}
\mathcal M=\dfrac{1-\kappa_1^2}{1+\Delta}\left(I+\dfrac{(1-\kappa_1^2)(a-\rho)}{\kappa_1^2+\Delta}\mathcal W D^2\rho\right)=\dfrac{1-\kappa_1^2}{1+\Delta}\mathcal W\left(I+\dfrac{\kappa_1^2-1}{\kappa_1^2}D\rho\otimes D\rho+\dfrac{(1-\kappa_1^2)(a-\rho)}{\kappa_1^2+\Delta}D^2\rho\right).
\end{align*}
We conclude that $\mathcal M$ is invertible at $x_0$ if and only if \eqref{eq: det vertical} is satisfied, completing hence the proof of the theorem.
\end{proof}

From Remark \ref{rmk:collimated}, $D^2\rho(x_0)<0$ is sufficient for existence of a lens $(\sigma_1,(\sigma_2,\phi))$ in a neighborhood of $x_0$ refracting a collimated field into the vertical direction. However, from Theorem \ref{thm:sufficient for vertical field}, this condition can be relaxed in the case when ${\bf e}=(0,0,1)$ allowing a larger family of lower faces $\sigma_1$, as summarized in the following corollary.


\begin{corollary}
Assume $D^2\rho(x_0)$ is invertible, and $\Lambda_1,\Lambda_2$ its corresponding non-zero eigenvalues with $\Lambda_1\geq \Lambda_2$. If 
\begin{equation}\label{ineq: eigenvalues}
\Lambda_2> \dfrac{\Delta^2(x_0)(\kappa_1^2+\Delta(x_0))}{\kappa_1^2(\kappa_1^2-1)(a-\rho(x_0))},\qquad or\qquad \Lambda_1<\dfrac{\kappa_1^2+\Delta(x_0)}{(\kappa_1^2-1)(a-\rho(x_0))}.
\end{equation}
then
\eqref{eq: det vertical} is satisfied at $x_0$.
\end{corollary}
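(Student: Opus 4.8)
The plan is to prove the corollary by analyzing the invertibility of the matrix
$$\mathcal{A}=I+\dfrac{\kappa_1^2-1}{\kappa_1^2}D\rho\otimes D\rho+\dfrac{(a-\rho)(1-\kappa_1^2)}{\kappa_1^2+\Delta}D^2\rho$$
appearing in \eqref{eq: det vertical}, by diagonalizing the symmetric matrix $D^2\rho$ and tracking where eigenvalues of $\mathcal{A}$ can vanish. Write $c=\dfrac{(\kappa_1^2-1)(a-\rho(x_0))}{\kappa_1^2+\Delta(x_0)}>0$, so that the goal is to show $I+\dfrac{\kappa_1^2-1}{\kappa_1^2}D\rho\otimes D\rho-c\,D^2\rho$ is invertible at $x_0$. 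The difficulty is that the rank-one perturbation $D\rho\otimes D\rho$ need not commute with $D^2\rho$, so one cannot simply diagonalize both simultaneously; this is the main obstacle and the reason the statement is phrased in terms of eigenvalue thresholds rather than a clean determinant formula.

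First I would dispose of the perturbation term by using positivity. Since $\dfrac{\kappa_1^2-1}{\kappa_1^2}>0$, the matrix $D\rho\otimes D\rho$ is positive semidefinite, hence $I+\dfrac{\kappa_1^2-1}{\kappa_1^2}D\rho\otimes D\rho\succeq I$. Now suppose $\mathcal{A}$ is singular, so there is a unit vector $v$ with $\mathcal{A}v=0$, i.e. $\left(I+\dfrac{\kappa_1^2-1}{\kappa_1^2}D\rho\otimes D\rho\right)v=c\,D^2\rho\,v$. Taking the inner product with $v$ gives $1+\dfrac{\kappa_1^2-1}{\kappa_1^2}(D\rho\cdot v)^2=c\,(D^2\rho\,v)\cdot v$. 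Since the left side is $\ge 1$, we get $c\,(D^2\rho\,v)\cdot v\ge 1$, so $(D^2\rho\,v)\cdot v\ge 1/c>0$; by the Rayleigh quotient this forces $\Lambda_1=\Lambda_{\max}(D^2\rho)\ge 1/c$, i.e. $\Lambda_1\ge \dfrac{\kappa_1^2+\Delta(x_0)}{(\kappa_1^2-1)(a-\rho(x_0))}$. Contrapositively, the second inequality in \eqref{ineq: eigenvalues}, namely $\Lambda_1<\dfrac{\kappa_1^2+\Delta(x_0)}{(\kappa_1^2-1)(a-\rho(x_0))}$, already forces $\mathcal{A}$ to be nonsingular, and hence \eqref{eq: det vertical} holds (note $\det D^2\rho\neq 0$ is assumed). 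This settles the ``or'' branch corresponding to small top eigenvalue.

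For the first inequality in \eqref{ineq: eigenvalues} I would look for a lower bound on the smallest singular value of $\mathcal{A}$ by bounding $|\mathcal{A}v|$ from below for unit $v$. Write $\mathcal{A}=B-c\,D^2\rho$ with $B=I+\dfrac{\kappa_1^2-1}{\kappa_1^2}D\rho\otimes D\rho$. Factoring, $\mathcal{A}=-c\,D^2\rho\left(I-\dfrac{1}{c}(D^2\rho)^{-1}B\right)$, so $\mathcal{A}$ is invertible exactly when $I-\dfrac{1}{c}(D^2\rho)^{-1}B$ is; I would instead use the cleaner symmetric route. Since $D^2\rho$ is invertible, for $\mathcal{A}v=0$ we have $(D^2\rho)^{-1}Bv=\dfrac{1}{c}v$, i.e. $\dfrac{1}{c}$ is an eigenvalue of $(D^2\rho)^{-1}B$. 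Conjugating by $(D^2\rho)^{1/2}$ when $D^2\rho\succ 0$, or more robustly comparing quadratic forms, one checks $w^{t}Bw\ge w^tw$ forces the eigenvalue $1/c$ of $(D^2\rho)^{-1}B$ to satisfy $1/c\ge 1/\Lambda_1$ on the side where $D^2\rho\succ 0$; to capture the other sign one writes $Bv=\dfrac{1}{c}D^2\rho\,v$ and takes the inner product with $(D^2\rho)^{-1}v$. Because $(D^2\rho)^{-1}\succ 0$ or $\prec 0$ only when $D^2\rho$ is definite, the general argument splits on the sign of $D^2\rho$: if $D^2\rho(x_0)\succ 0$ the relevant obstruction is the small eigenvalue $\Lambda_2$, and pairing $Bv=\tfrac1c D^2\rho v$ with $(D^2\rho)^{-1}v$ gives $v^t(D^2\rho)^{-1}Bv=\tfrac1c|v|^2$; using $v^t(D^2\rho)^{-1}Bv\le \tfrac{1}{\Lambda_2}v^tBv$ and a bound $v^tBv\le 1+\tfrac{\kappa_1^2-1}{\kappa_1^2}|D\rho|^2=\tfrac{\Delta^2}{\kappa_1^2}$ (from \eqref{eq: Delta}) yields $\tfrac1c\le \tfrac{\Delta^2}{\kappa_1^2\Lambda_2}$, i.e. $\Lambda_2\le \dfrac{\Delta^2(x_0)(\kappa_1^2+\Delta(x_0))}{\kappa_1^2(\kappa_1^2-1)(a-\rho(x_0))}$. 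Contrapositively, the first inequality in \eqref{ineq: eigenvalues} rules this out, giving invertibility. Finally I would record that these two branches are exactly the hypotheses in \eqref{ineq: eigenvalues}, so in either case \eqref{eq: det vertical} holds at $x_0$, and the corollary follows from Theorem \ref{thm:sufficient for vertical field}. The trickiest bookkeeping is the sign case split and getting the sharp constant $\Delta^2/\kappa_1^2$ for $v^tBv$; I expect that to be the only real subtlety, the rest being the Rayleigh-quotient manipulations above.
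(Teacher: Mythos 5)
Your overall strategy is sound and genuinely different from the paper's. The paper diagonalizes nothing and takes no kernel vector: it simply observes that $\mathcal A$ is the sum of the two symmetric matrices $B=I+\frac{\kappa_1^2-1}{\kappa_1^2}D\rho\otimes D\rho$ (eigenvalues $\frac{\Delta^2}{\kappa_1^2}$ and $1$, by \eqref{eq: Delta}) and $-c\,D^2\rho$ with $c=\frac{(\kappa_1^2-1)(a-\rho)}{\kappa_1^2+\Delta}>0$, and applies Weyl's inequality to sandwich the eigenvalues $\mu_1\geq\mu_2$ of $\mathcal A$ as $1-c\Lambda_1\leq\mu_2\leq\mu_1\leq\frac{\Delta^2}{\kappa_1^2}-c\Lambda_2$; either hypothesis in \eqref{ineq: eigenvalues} then forces both $\mu_i$ to have the same strict sign. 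Your Rayleigh-quotient argument on a putative unit kernel vector $v$ is an equally valid (and arguably more elementary) route: from $\mathcal A v=0$ one gets $v^tBv=c\,v^tD^2\rho\,v$, and sandwiching $v^tBv\in[1,\frac{\Delta^2}{\kappa_1^2}]$ and $v^tD^2\rho\,v\in[\Lambda_2,\Lambda_1]$ yields \emph{both} necessary conditions for singularity, $\Lambda_1\geq\frac1c$ and $\Lambda_2\leq\frac{\Delta^2}{c\kappa_1^2}$, in one line. Your treatment of the second branch does exactly this and is correct.

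The first branch, however, is written with two compensating algebra errors and an unnecessary detour. From $\mathcal Av=0$ you get $Bv=c\,D^2\rho\,v$, hence $(D^2\rho)^{-1}Bv=c\,v$, not $\frac1c v$ as you write; and later the step from ``$\frac1c\leq\frac{\Delta^2}{\kappa_1^2\Lambda_2}$'' to ``$\Lambda_2\leq\frac{\Delta^2(\kappa_1^2+\Delta)}{\kappa_1^2(\kappa_1^2-1)(a-\rho)}$'' is also inverted (that inequality actually rearranges to $\Lambda_2\leq\frac{c\Delta^2}{\kappa_1^2}$, which is the wrong threshold). The two slips cancel, so you land on the correct bound, but neither line is right as stated. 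Moreover, the pairing with $(D^2\rho)^{-1}v$ and the announced case split on the sign of $D^2\rho$ are not needed: taking $v^t$ of $Bv=c\,D^2\rho\,v$ directly gives $c\,v^tD^2\rho\,v=v^tBv\leq\frac{\Delta^2}{\kappa_1^2}$, whence $\Lambda_2\leq v^tD^2\rho\,v\leq\frac{\Delta^2}{c\kappa_1^2}=\frac{\Delta^2(\kappa_1^2+\Delta)}{\kappa_1^2(\kappa_1^2-1)(a-\rho)}$, with no invertibility or definiteness of $D^2\rho$ used anywhere. (If you do want to keep the restriction to $D^2\rho\succ0$ in that branch, you should say why it is harmless: the hypothesis $\Lambda_2>0$ forces it, as the paper's remark after the corollary notes; as written, the indefinite and negative-definite cases are left dangling.)
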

\begin{proof}
We denote by $\mathcal A$ the matrix in \eqref{eq: det vertical}, i.e.
\begin{equation}\label{eq:mathcal A}
\mathcal A=I+\dfrac{\kappa_1^2-1}{\kappa_1^2}D\rho\otimes D\rho+\dfrac{(1-\kappa_1^2)(a-\rho)}{\kappa_1^2+\Delta}D^2\rho,
\end{equation}
and let $\mu_1,\mu_2$ be its corresponding eigenvalues with $\mu_1\geq \mu_2$.
Condition \ref{eq: det vertical} is satisfied if and only if $\mathcal A$ is invertible i.e. $\mu_1,\mu_2\neq 0$.

The eigenvalues of the matrix $I+\dfrac{\kappa_1^2-1}{\kappa_1^2}D\rho\otimes D\rho$ are  in decreasing order $\dfrac{\Delta^2}{\kappa_1^2}$ and $1$. Since $\kappa_1>1$, $a-\rho>0$, and $\Lambda_1\geq \Lambda_2$ then the eignevalues of $\dfrac{(1-\kappa_1^2)(a-\rho)}{\kappa_1(1+\Delta)}D^2\rho$ are in decreasing order
$\dfrac{(1-\kappa_1^2)(a-\rho)}{\kappa_1^2+\Delta}\Lambda_2$ and $\dfrac{(1-\kappa_1^2)(a-\rho)}{\kappa_1^2+\Delta}\Lambda_1.$

Hence, from Weyl's inequality 
$$1+\dfrac{(1-\kappa_1^2)(a-\rho)}{\kappa_1^2+\Delta}\Lambda_1\leq \mu_2\leq \mu_1\leq \dfrac{\Delta^2}{\kappa_1^2}+\dfrac{(1-\kappa_1^2)(a-\rho)}{\kappa_1^2+\Delta}\Lambda_2.$$
Therefore, inequalities \ref{ineq: eigenvalues} imply that $\mathcal A$ is invertible.
\end{proof}

\begin{remark}\rm
The first inequality in \eqref{ineq: eigenvalues} implies that $\Lambda_1\geq \Lambda_2>0$ and so $\rho$ is strictly convex in a neighborhood of $x_0$. The second inequality in \eqref{ineq: eigenvalues} allows cases where $D^2\rho(x_0)$ is positive definite or negative definite or indefinite.
\end{remark}

\section{A near-field imaging problem}\label{sec:Imaging}

\subsection{Problem setup}\label{subsec: setup imaging} 
We are given $\Omega,\Omega^*\subseteq \mathbb R^2$ open and simply connected domains, a $C^1$ bijective map $T:\Omega\mapsto \Omega^*$, positive real numbers $a$ and $c$ with $c>a$.
   \begin{figure}[h]
\centering
\includegraphics[width=0.6\textwidth]{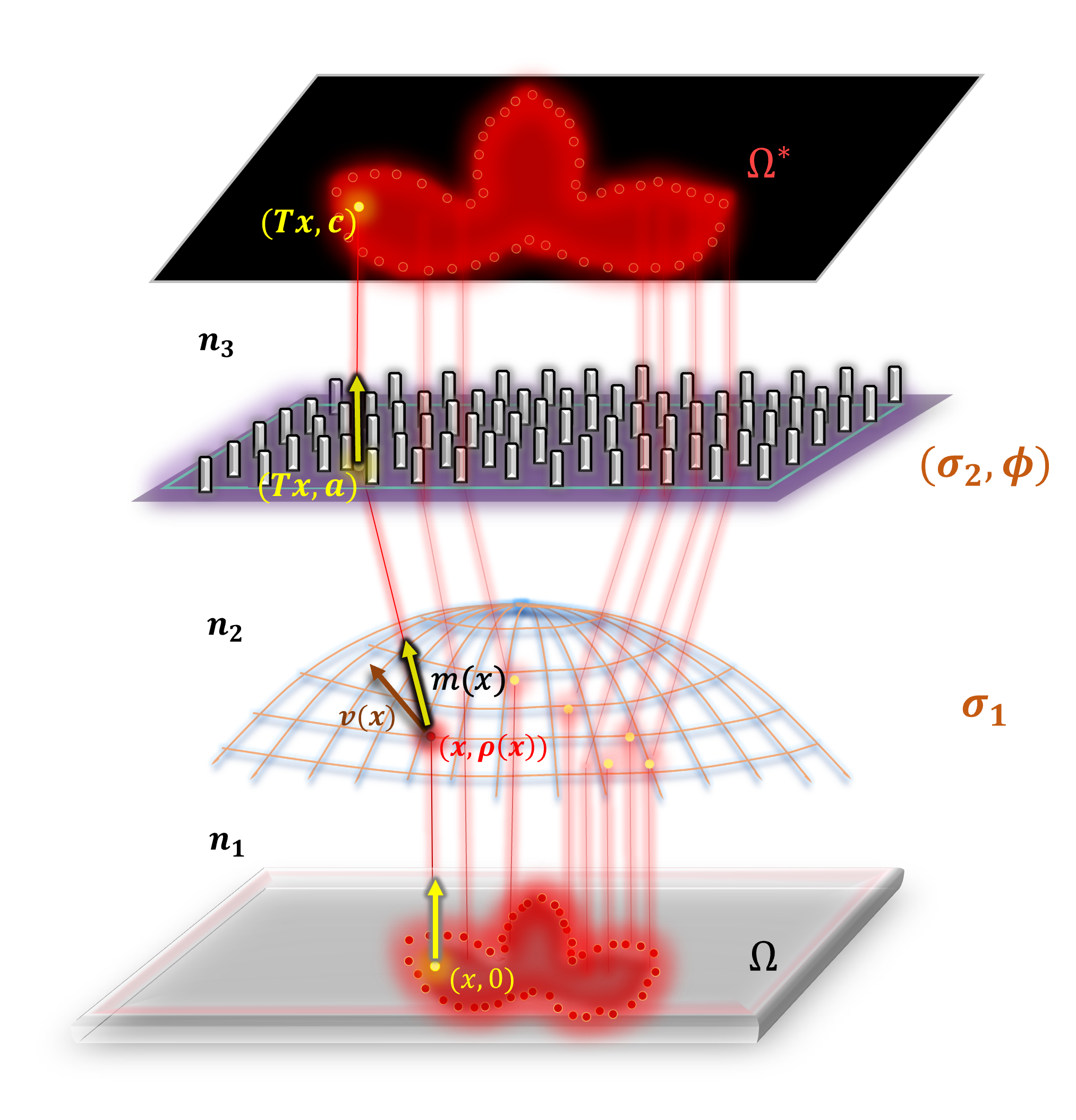}
\caption{}
\label{Imaging Problem}
\end{figure}
 Monochromatic radiation with frequency $\omega$ are issued from $(x,0), x=(x_1,x_2)\in \Omega$, with vertical direction ${\bf e}=(0,0,1)$. Our goal is to construct a lens $(\sigma_1,(\sigma_2,\phi))$ with lower face $\sigma_1=\{(x,\rho(x))\}_{x\in \Omega}$ a conventional $C^2$ refracting surface between the planes $\{x_3=0\}$ and $\{x_3=a\}$; and upper face $(\sigma_2,\phi)$ a planar metasurface with $\sigma_2\subseteq \{x_3=a\}$ and  $\phi:=\phi(u_1,u_2,u_3)$ a tangential phase discontinuity defined in a neighborhood of every point of $\sigma_2$, such that:

\begin{enumerate}
    \item the lens $(\sigma_1,(\sigma_2,\phi))$ refracts every incident vertical ray emitted from $(x,0)$, $x\in \Omega$, into the point $(Tx,c)$;
    \item all the rays leave $(\sigma_2,\phi)$ with the vertical direction.
\end{enumerate} 
 The lens $(\sigma_1,(\sigma_2,\phi))$ projects an image $\Omega^*=T(\Omega)$ on the target plane $\{x_3=c\}$,  see Figure \ref{Imaging Problem}.
 
 We assume that $n_1, n_2$ and $n_3$ with $n_2>n_1$ are the refractive indices of the media I, II and III corresponding to the regions below, enclosed by, and above the lens $(\sigma_1,(\sigma_2,\phi))$ respectively.


To do this, we first investigate the existence of $\rho$ positive and $C^2$ such that for every $x\in \Omega$ the surface $\sigma_1=\{(x,\rho(x))\}_{x\in \Omega}$ refracts the incident vertical ray emitted from $(x,0)$
into the point $(Tx,a)$ on $\sigma_2$, and then verify whether $\rho$ satisfies the conditions in Theorem \ref{thm:sufficient for vertical field} when it exists.

Throughout this section, denote $\kappa_1=\dfrac{n_2}{n_1}$ and $Sx=Tx-x$.

\subsection{Existence of $\rho$}\label{sec: existence of rho}

\begin{proposition}\label{prop: PDE of rho}
A $C^2$ surface $\sigma_1=\{(x,\rho(x))\}_{x\in \Omega}$, separating media I and II, and refracting vertical rays emitted from $(x,0)$ in medium I into $(Tx,a)$ in medium II exists if and only if 
\begin{equation}\label{eq:imaging inequality}
a>a-\rho(x)>\dfrac{|Sx|}{\sqrt{\kappa_1^2-1}}
\end{equation}
for every $x\in \Omega$,
and $\rho$ satisfies the following system of PDEs
\begin{equation}\label{eq:PDE of rho}
D\rho(x)=\dfrac{\kappa_1 Sx}{\sqrt{|Sx|^2+(a-\rho(x))^2}-\kappa_1(a-\rho(x))}.
\end{equation}
\end{proposition}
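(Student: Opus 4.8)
The plan is to translate the geometric requirement — that a vertical ray from $(x,0)$ hits $\sigma_1$ at $P(x)=(x,\rho(x))$ and refracts toward $(Tx,a)$ — directly into Snell's law \eqref{eq: standard Snell's law}, and then extract both the inequality \eqref{eq:imaging inequality} and the PDE system \eqref{eq:PDE of rho}. First I would write down the incident direction ${\bf e}=(0,0,1)$, the refracted unit direction
$$
{\bf m}(x)=\frac{(Tx,a)-(x,\rho(x))}{|(Tx,a)-(x,\rho(x))|}=\frac{(Sx,\,a-\rho(x))}{\sqrt{|Sx|^2+(a-\rho(x))^2}},
$$
and the upward unit normal $\nu(x)=\dfrac{(-D\rho(x),1)}{\sqrt{1+|D\rho(x)|^2}}$. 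For a surface with these data to exist I need exactly that ${\bf e}-\kappa_1{\bf m}$ is a (nonnegative) multiple of $\nu$, which is the content of \eqref{eq: standard Snell's law}. So the logical backbone is: such a $\sigma_1$ exists $\iff$ $\rho$ is $C^2$, the refraction condition $\kappa_1>1$ plus $\mathbf e\cdot\nu>0$ guarantees a refracted ray exists, and the direction of that refracted ray actually points at $(Tx,a)$.

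**Extracting the PDE.** Plugging ${\bf e}=(0,0,1)$ into \eqref{eq: m inside II}–\eqref{eq:m for vertical field} (already computed in Section \ref{subsec:vertical}) gives
$$
{\bf m}=\frac{1}{\kappa_1}\Bigl(\frac{1-\kappa_1^2}{1+\Delta}D\rho,\ 1+\frac{\kappa_1^2-1}{1+\Delta}\Bigr),\qquad \Delta=\sqrt{\kappa_1^2+(\kappa_1^2-1)|D\rho|^2}.
$$
The requirement that this ${\bf m}$ be parallel to $(Sx,\,a-\rho(x))$ means the horizontal part is a scalar multiple of $Sx$ and the ratio (horizontal length)/(vertical component) matches $|Sx|/(a-\rho)$. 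Taking the ratio of horizontal to vertical components of ${\bf m}$ and equating it with $Sx/(a-\rho)$ yields
$$
\frac{1-\kappa_1^2}{\kappa_1^2+\Delta}\,D\rho=\frac{Sx}{a-\rho(x)},
$$
so $D\rho=\dfrac{(\kappa_1^2+\Delta)}{(1-\kappa_1^2)(a-\rho)}\,Sx$; in particular $D\rho$ is a scalar multiple of $Sx$. Then I would substitute this back into the definition of $\Delta$ to solve for $\Delta$ (or directly for $|D\rho|$) in closed form: writing $D\rho=t\,Sx$ one gets a scalar equation in $t$, and solving it produces $\sqrt{|Sx|^2+(a-\rho)^2}$ inside the denominator, which after simplification is exactly \eqref{eq:PDE of rho}. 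The algebra here is the one genuinely fiddly step — one must pick the correct root when solving the quadratic for $\Delta$, using that $D\rho$ and $Sx$ are positively proportional with the sign dictated by $\kappa_1>1$ (so $1-\kappa_1^2<0$, forcing $a-\rho>0$ and a specific sign of the proportionality constant).

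**The inequality and the equivalence.** The chain of inequalities \eqref{eq:imaging inequality} comes out of the consistency of this solution: $a-\rho(x)<a$ is just $\rho>0$; and $a-\rho(x)>|Sx|/\sqrt{\kappa_1^2-1}$ is precisely the condition that makes the denominator $\sqrt{|Sx|^2+(a-\rho)^2}-\kappa_1(a-\rho)$ in \eqref{eq:PDE of rho} nonzero — indeed negative — so that $D\rho$ points in the direction $-Sx$ consistently (matching the sign extracted above), equivalently it is the condition ${\bf e}\cdot\nu>\sqrt{1-\kappa_1^{-2}}\cdot(\text{something})$ rephrased; more directly it is just $\kappa_1^2(a-\rho)^2>|Sx|^2+(a-\rho)^2$. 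I should double-check this by noting $\sqrt{|Sx|^2+(a-\rho)^2}<\kappa_1(a-\rho)\iff |Sx|^2<(\kappa_1^2-1)(a-\rho)^2$. For the forward direction ($\sigma_1$ exists $\Rightarrow$ the conditions), I read Snell's law backwards exactly as above. For the converse, given $\rho$ satisfying \eqref{eq:imaging inequality}–\eqref{eq:PDE of rho}, I form $\sigma_1=\{(x,\rho(x))\}$, check that with this normal the refracted direction computed from \eqref{eq: m inside II} equals the unit vector toward $(Tx,a)$ — which holds by reversing the algebra — and that refraction genuinely occurs, which is automatic since $\kappa_1>1$ and ${\bf e}\cdot\nu=1/\sqrt{1+|D\rho|^2}>0$.

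**Main obstacle.** I expect the only real difficulty to be the closed-form inversion: going from "$D\rho$ is proportional to $Sx$ with proportionality constant involving $\Delta(|D\rho|)$" to the explicit formula \eqref{eq:PDE of rho}, and correctly tracking which square-root branch is forced by the physical constraints $\kappa_1>1$, $a-\rho>0$. Everything else is a direct application of the standard Snell's law \eqref{eq: standard Snell's law}–\eqref{eq: lambda} together with the already-derived formulas \eqref{eq:m for vertical field}. A clean way to organize it is to avoid ever introducing $\Delta$: just impose that $(0,0,1)-\kappa_1{\bf m}$ is parallel to $(-D\rho,1)$ where ${\bf m}$ is the unit vector toward $(Tx,a)$, read off $-\kappa_1 m_3 = \lambda$ and $-\kappa_1 m' = \lambda(-D\rho)$, eliminate $\lambda$, and solve the resulting linear relation $D\rho=\tfrac{\kappa_1 m'}{\,1-\kappa_1 m_3\,}$ for $D\rho$ after substituting the explicit components of $m'$ and $m_3$ — this lands on \eqref{eq:PDE of rho} with the sign condition \eqref{eq:imaging inequality} appearing exactly as the requirement $1-\kappa_1 m_3>0$, i.e. that $\lambda<0$ as demanded in Section \ref{sec:Standard Snell}.
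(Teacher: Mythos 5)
Your proposal is correct and follows essentially the same route as the paper: both impose Snell's law with the vertical incident direction and the unit vector toward $(Tx,a)$ as the refracted direction, extract \eqref{eq:PDE of rho} by eliminating the proportionality factor, obtain the right-hand inequality of \eqref{eq:imaging inequality} from the sign condition $\kappa_1 m_3>1$ (equivalently $\lambda<0$), and reverse the algebra for the converse. Your final ``clean way'' that eliminates $\lambda$ directly instead of passing through $\Delta$ and the auxiliary identity \eqref{eq:auxiliary} is a minor reorganization of the same computation, not a different argument.
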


\begin{proof}
Assume for each $x\in \Omega$ the ray with vertical direction ${\bf e}=(0,0,1)$ is refracted at $(x,\rho(x))$ into the point $(Tx,a)$, then trivially $\sigma_1$ is between the plane $\{x_3=0\}$ and $\{x_3=a\}$ and so $0<a-\rho<a$. The unit direction of the refracted ray is then
\begin{equation*}
{\bf m}(x)=\dfrac{(Tx,a)-(x,\rho(x))}{|(Tx,a)-(x,\rho(x))|}=\dfrac{(Sx,a-\rho(x))}{|(S(x),a-\rho(x))|}.
\end{equation*}
Therefore, from \eqref{eq:m for vertical field} 
\begin{equation}\label{eq:components of m}
\begin{cases}
    \dfrac{Sx}{|(Sx,a-\rho(x))|}&=\dfrac{1-\kappa_1^2}{\kappa_1(1+\Delta)}D\rho(x) \\
    \dfrac{a-\rho(x)}{|(Sx,a-\rho(x))|}&=\dfrac{1}{\kappa_1}\left(1+\dfrac{\kappa_1^2-1}{1+\Delta}\right) 
\end{cases}
\end{equation}
with $\Delta$ given in \eqref{eq: Delta}.

From the second equation of system \eqref{eq:components of m}, $\dfrac{a-\rho(x)}{|(Sx,a-\rho(x))|}>\dfrac{1}{\kappa_1}$. Squaring both sides and solving for $a-\rho$ yields the right inequality in \eqref{eq:imaging inequality}.
Further, this same equation gives
\begin{equation}\label{eq:auxiliary}
\dfrac{1-\kappa_1^2}{1+\Delta}=\dfrac{|(Sx,a-\rho(x))|-\kappa_1(a-\rho(x))}{|(Sx,a-\rho(x))|},
\end{equation}
and so replacing \eqref{eq:auxiliary} in the first equation of the system \eqref{eq:components of m} and solving for $D\rho$ we obtain \eqref{eq:PDE of rho}.

Conversely, assume $\rho$ satisfies \eqref{eq:imaging inequality}, and \eqref{eq:PDE of rho}. From \eqref{eq:imaging inequality}, $\sigma_1=\{(x,\rho(x)\}$ is between the plane $\{x_3=0\}$ and $\{x_3=a\}$,  and since $S\in C^1(\Omega)$ then from \eqref{eq:PDE of rho} $\rho\in C^2(\Omega).$  We show that $\rho$ verifies the system \eqref{eq:components of m}, which implies from \eqref{eq:m for vertical field} that the vertical ray emitted from $(x,0)$ is refracted at $(x,\rho(x))$ into $(Tx,a)$.

In fact, from \eqref{eq:imaging inequality}, the denominator in \eqref{eq:PDE of rho} is negative, so replacing \eqref{eq:PDE of rho} in \eqref{eq: Delta} yields
\begin{align*}
\Delta&=\sqrt{\kappa_1^2+\dfrac{(\kappa_1^2-1)\kappa_1^2|Sx|^2}{\left(|(Sx,a-\rho(x))|-\kappa_1(a-\rho(x))\right)^2}}\\
&=\dfrac{\kappa_1\sqrt{|(Sx,a-\rho(x))|^2+\kappa_1^2(a-\rho(x))^2-2\kappa_1|(Sx,a-\rho(x))|(a-\rho(x))+(\kappa_1^2-1)|Sx|^2}}{\kappa_1(a-\rho(x))-|(Sx,a-\rho(x))|}\\
&=\dfrac{\kappa_1\sqrt{\kappa_1^2|(Sx,a-\rho(x))|^2-2\kappa_1|(Sx,a-\rho(x))|(a-\rho(x))+(a-\rho(x))^2}}{\kappa_1(a-\rho(x))-|(Sx,a-\rho(x))|}\\
\end{align*}
Since $\kappa_1>1$, then $\kappa_1|(Sx,a-\rho(x))|\geq a-\rho(x)$, and so
\begin{equation}\label{eq:nice Delta}
\Delta(x)=\dfrac{\kappa_1^2|(Sx,a-\rho(x))|-\kappa_1(a-\rho(x))}{\kappa_1(a-\rho(x))-|(Sx,a-\rho(x))|}=-1+\dfrac{(\kappa_1^2-1)|(Sx,a-\rho(x))|}{\kappa_1(a-\rho(x))-|(Sx,a-\rho(x))|}.
\end{equation}
We then obtain \eqref{eq:auxiliary} and so together with \eqref{eq:PDE of rho}, the system \eqref{eq:components of m} follows.


\end{proof}

From Inequality \eqref{eq:inequality local}, given the map $T$, $Sx=Tx-x$, the plane $\{x_3=a\}$ should be chosen so that 
\begin{equation}\label{eq:thickness}
a>\dfrac{|Sx|}{\sqrt{\kappa_1^2-1}}
\end{equation}
for every $x$, this gives a condition on the thickness of our objective lens. In that case, finding the lower face $\sigma_1=\{(x,\rho(x))\}_{x\in \Omega}$ of the lens solution to the imaging problem in Section \ref{subsec: setup imaging} reduces to finding positive solutions to the system \eqref{eq:PDE of rho} satisfying the inequality \eqref{eq:imaging inequality}. 
Notice that \eqref{eq:PDE of rho} can be written as follows
\begin{equation}\label{eq:form 2 of PDE}
D(a-\rho(x))=\dfrac{\kappa_1\dfrac{Sx}{a-\rho(x)}}{\kappa_1-\sqrt{\left|\dfrac{Sx}{a-\rho(x)}\right|^2+1}}:=V\left(x,a-\rho(x)\right)\end{equation}
with $V(x,z):=(V_1(x,z),V_2(x,z))=\dfrac{\kappa_1 \dfrac{Sx}{z}}{\kappa_1-\sqrt{\left|\dfrac{Sx}{z}\right|^2+1}},$
$x=(x_1,x_2)\in \Omega$, and $z\in \mathbb R$ such that $a>z>\dfrac{|Sx|}{\sqrt{\kappa_1^2-1}}.$
Writing \eqref{eq:PDE of rho} in the form \eqref{eq:form 2 of PDE} allows us to use the theory in \cite[Chapter 6]{hartman2002ordinary} to find necessary and sufficient conditions for the existence and uniqueness of local solutions $\rho$. 
In fact, since $S$ is $C^1$ then a solution $\rho$ to \eqref{eq:form 2 of PDE} is $C^2$ and by the mixed derivative theorem $\rho_{x_1x_2}=\rho_{x_2x_1}$. Hence,
\begin{equation}\label{eq:Hartman condition}
\dfrac{\partial V_1}{\partial x_2} +\dfrac{\partial V_1}{\partial z}V_2=\dfrac{\partial V_2}{\partial x_1}+\dfrac{\partial V_2}{\partial z}V_1.
\end{equation}

\begin{theorem}\label{thm:Hartman verification}
Given $x_0\in \Omega$ satisfying \eqref{eq:thickness} and $z_0$ such that
\begin{equation}\label{eq:inequality local}
   a>z_0>\dfrac{|Sx_0|}{\sqrt{\kappa_1^2-1}},
\end{equation}
\eqref{eq:Hartman condition} 
is satisfied in an open neighborhood $\mathcal \mathcal U\subseteq \left\{(x,z):x\in \Omega, a>z>\frac{|Sx|}{\sqrt{\kappa_1^2-1}}\right\}$ of $(x_0,z_0)$ if and only if for every $x$ in a neighborhood of $x_0$
\begin{align}
\nabla \times S&=0\label{condition one}
\\
S\times D|S|^2&=0.\label{condition two}
\end{align}
\end{theorem}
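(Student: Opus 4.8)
The plan is to exploit the special structure of $V$: after clearing the denominator in its definition by $z>0$, write $V(x,z)=g(x,z)\,Sx$ with the scalar factor
\[
g(x,z)=\frac{\kappa_1}{\kappa_1 z-\sqrt{|Sx|^2+z^2}}=G\big(|Sx|^2,z\big),\qquad G(p,z):=\frac{\kappa_1}{\kappa_1 z-\sqrt{p+z^2}},
\]
so that $g$ depends on $x$ only through $|Sx|^2$. On the admissible region $\{a>z>|Sx|/\sqrt{\kappa_1^2-1}\}$ one has $\kappa_1 z>\sqrt{|Sx|^2+z^2}>0$, hence $G>0$ there, and a direct differentiation gives
\[
G_p(p,z)=\frac{\kappa_1}{2\sqrt{p+z^2}\,\big(\kappa_1 z-\sqrt{p+z^2}\big)^{2}}>0,\qquad \frac{G_p(p,z)}{G(p,z)}=\frac{1}{2\sqrt{p+z^2}\,\big(\kappa_1 z-\sqrt{p+z^2}\big)}.
\]
I would record at the outset the decisive fact, verified by computing the $z$-derivative, that for each admissible $p$ the ratio $G_p/G$ is a \emph{non-constant} function of $z$ on every interval.

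First I would substitute $V=g\,S$ into the compatibility identity \eqref{eq:Hartman condition} and simplify. Since $\partial_z V_i=(\partial_z g)S_i$ is a scalar multiple of $S$, the $z$-terms cancel, $(\partial_z V_1)V_2-(\partial_z V_2)V_1=g\,(\partial_z g)(S_1S_2-S_2S_1)=0$; and by the chain rule together with the definitions of the scalar curl and of the scalar cross product,
\[
\partial_{x_2}V_1-\partial_{x_1}V_2=G_p\big[(|S|^2)_{x_2}S_1-(|S|^2)_{x_1}S_2\big]+g\big[(S_1)_{x_2}-(S_2)_{x_1}\big]=G_p\,\big(S\times D|S|^2\big)-g\,\big(\nabla\times S\big).
\]
Consequently \eqref{eq:Hartman condition} holds at a point $(x,z)$ of the admissible region if and only if the single scalar identity
\[
G_p(|Sx|^2,z)\,\big(S\times D|S|^2\big)=G(|Sx|^2,z)\,\big(\nabla\times S\big)
\]
holds there; call it $(\star)$.

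It then remains to pass between $(\star)$ and the pair \eqref{condition one}--\eqref{condition two}. For the ``if'' direction, if \eqref{condition one} and \eqref{condition two} hold on a neighborhood $W$ of $x_0$, both sides of $(\star)$ vanish for $x\in W$, so \eqref{eq:Hartman condition} holds on the open set $\mathcal U=\{(x,z):x\in W\cap\Omega,\ a>z>|Sx|/\sqrt{\kappa_1^2-1}\}$, which is a neighborhood of $(x_0,z_0)$ by \eqref{eq:inequality local}. For the converse, suppose $(\star)$ holds on an open neighborhood $\mathcal U$ of $(x_0,z_0)$; fix $x$ in the (open) projection of $\mathcal U$ onto $\Omega$ and pick a nonempty open interval $I_x$ with $\{x\}\times I_x\subseteq\mathcal U$. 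Writing $A(x)=S\times D|S|^2$ and $B(x)=\nabla\times S$, which do not depend on $z$, relation $(\star)$ reads $G_p(|Sx|^2,z)\,A(x)=G(|Sx|^2,z)\,B(x)$ for every $z\in I_x$. If $A(x)\neq0$ then $G_p/G\equiv B(x)/A(x)$ would be constant in $z$ on $I_x$, contradicting the non-constancy recorded above; hence $A(x)=0$, and then $G\,B(x)=0$ with $G>0$ forces $B(x)=0$. Since $x$ ranged over a neighborhood of $x_0$, this gives \eqref{condition one} and \eqref{condition two} near $x_0$. The heart of the argument, and the only place where genuine computation is required, consists of the two non-degeneracy facts just used --- that $G$ and $G_p$ are nowhere zero on the admissible region, which is exactly where the thickness inequality \eqref{eq:inequality local} enters, and that $G_p/G$ is a genuinely non-constant function of $z$; it is this non-degeneracy that forces the single scalar compatibility equation to be equivalent to the two separate conditions \eqref{condition one} and \eqref{condition two}.
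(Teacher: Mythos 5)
Your proof is correct and follows essentially the same route as the paper: you factor $V$ as a scalar function of $(|Sx|^2,z)$ times $S$ (your $G(|Sx|^2,z)$ is exactly the paper's $\varphi\left(\left|\tfrac{Sx}{z}\right|^2\right)/z$), reduce \eqref{eq:Hartman condition} to the single identity $G_p\,(S\times D|S|^2)=G\,(\nabla\times S)$, and then separate the two conditions using the positivity of $G$ and $G_p$ together with the non-constancy in $z$ of $G_p/G$ --- which is precisely the paper's argument of dividing by $\varphi$ and differentiating in $z$. The one assertion you leave as ``verified by computing the $z$-derivative'' (that $G_p/G$ is non-constant in $z$ on every admissible interval) is also left at the same level of detail in the paper, and it does hold, so there is no substantive gap.
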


\begin{proof} 
Write $S=(S_1,S_2)$, and $V_i(x,z)=\varphi\left(\left|\dfrac{Sx}{z}\right|^2\right)\dfrac{S_ix}{z},$
with 
\begin{equation}\label{eq:psi}
\varphi(y)=\dfrac{\kappa_1}{\kappa_1-\sqrt{y+1}}
\end{equation}
defined for $y\in \left[0,\kappa_1^2-1\right)$. Notice that $\varphi$ is positive, increasing and 
\begin{equation}\label{eq:phi'}
\varphi'(y)=\dfrac{\kappa_1}{2\sqrt{y+1}\left(\kappa_1-\sqrt{y+1}\right)^2}=\dfrac{1}{2\kappa_1\sqrt{y+1}}\varphi^2(y).
\end{equation}
Therefore for $i,j=1,2$
\begin{align}
\dfrac{\partial V_i}{\partial x_j}&=2\dfrac{S_ix}{z^3}(S\cdot S_{x_j})\varphi'\left(\left|\dfrac{Sx}{z}\right|^2\right)+\dfrac{1}{z}\varphi\left(\left|\dfrac{Sx}{z}\right|^2\right)\dfrac{\partial S_i}{\partial x_j} \label{eq:vixj}\\
\dfrac{\partial V_i}{\partial z}&=-\dfrac{2|Sx|^2}{z^4}\varphi'\left(\left|\dfrac{Sx}{z}\right|^2\right)S_ix-\dfrac{1}{z^2}\varphi\left(\left|\dfrac{Sx}{z}\right|^2\right)S_ix .\label{eq:Viz}
\end{align}
Notice that $\dfrac{\partial V_1}{\partial z}V_2=\dfrac{\partial V_2}{\partial z}V_1$, and hence
\eqref{eq:Hartman condition} becomes
$$2\dfrac{S_1x}{z^3}(S\cdot S_{x_2})\varphi'\left(\left|\dfrac{Sx}{z}\right|^2\right)+\dfrac{1}{z}\varphi\left(\left|\dfrac{Sx}{z}\right|^2\right)(S_1)_{x_2}=
2\dfrac{S_2x}{z^3}(S\cdot S_{x_1})\varphi'\left(\left|\dfrac{Sx}{z}\right|^2\right)+\dfrac{1}{z}\varphi\left(\left|\dfrac{Sx}{z}\right|^2\right)(S_2)_{x_1},$$
and so
\begin{equation}\label{eq: Hartman condition bis}
    \dfrac{1}{z^2}\left(S\times D|S|^2\right)\varphi'\left(\left|\dfrac{Sx}{z}\right|^2\right)-(\nabla\times S)\varphi\left(\left|\dfrac{Sx}{z}\right|^2\right)=0.
\end{equation}
Clearly \eqref{condition one}, and \eqref{condition two} implies \eqref{eq: Hartman condition bis}. 

Conversely, assume \eqref{eq: Hartman condition bis} is satisfied for every $x$ in a neighborhood $U_{x_0}$ of $x_0$ in $\Omega$ and $z$ in a neighborhood $V_{z_0}$ of $z_0$ such that $a>z>\dfrac{|Sx|}{\sqrt{\kappa_1^2-1}}$ for every $x\in U_{x_0}$ and $z\in V_{z_0}$. We show that \eqref{condition one} and \eqref{condition two} then follow for every $x\in U_{x_0}$. In fact, from \eqref{eq:phi'}, and the fact that $\varphi>0$, \eqref{eq: Hartman condition bis} can be written as follows
$$\dfrac{1}{2\kappa_1 z^2\sqrt{1+\left|\dfrac{Sx}{z}\right|^2}}(S\times D|S|^2)\varphi\left(\left|\dfrac{Sx}{z}\right|^2\right)-\nabla \times S=0.$$
Fixing $x\in U_{x_0}$, and differentiating with respect to $z$ yields
$$\dfrac{\partial}{\partial z}\left(\dfrac{1}{2\kappa_1 z^2\sqrt{1+\left|\dfrac{Sx}{z}\right|^2}}\varphi\left(\left|\dfrac{Sx}{z}\right|^2\right)\right)(S\times D|S|^2)=0\qquad\forall z\in V_{z_0}.$$
Since the term in large parenthesis above is not constant in $z$ then $(S\times D|S|^2) (x)=0$, and therefore from \eqref{eq: Hartman condition bis}, since $\varphi>0$, $\nabla \times S (x)=0$.


\end{proof}

Consequently Theorem \ref{thm:Hartman verification} and \cite[Chapter 6]{hartman2002ordinary} implies the following result.

\begin{corollary}\label{cor:result}
Given $x_0\in \Omega$ satisfying \eqref{eq:thickness} and $z_0$ verifying \eqref{eq:inequality local}, if \eqref{condition one} and \eqref{condition two} hold for every $x$ in an open set containing $x_0$, then the system of PDEs \eqref{eq:PDE of rho} has a unique positive $C^2$ solution $\rho$ in a neighborhood of $x_0$ satisfying \eqref{eq:imaging inequality} with $a-\rho(x_0)=z_0$.
\end{corollary}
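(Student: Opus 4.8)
The plan is to assemble Corollary \ref{cor:result} directly from Theorem \ref{thm:Hartman verification} together with the local existence/uniqueness theory for total differential equations in \cite[Chapter 6]{hartman2002ordinary}. First I would recall the setup: by Proposition \ref{prop: PDE of rho}, finding the lower face $\sigma_1$ amounts to solving the system \eqref{eq:PDE of rho}, which by the substitution $z=a-\rho(x)$ is rewritten in the Hartman normal form \eqref{eq:form 2 of PDE}, namely $D z(x) = -V(x,z(x))$ (equivalently $D(a-\rho) = V(x,a-\rho)$), with $V=(V_1,V_2)$ the $C^1$ vector field on the open set $\mathcal D = \{(x,z): x\in\Omega,\ a>z>|Sx|/\sqrt{\kappa_1^2-1}\}$ defined there. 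The hypotheses \eqref{eq:thickness} and \eqref{eq:inequality local} guarantee that $(x_0,z_0)\in\mathcal D$, so the point about which we solve is an interior point of the domain of definition of $V$.

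Next I would invoke the compatibility (Frobenius/Hartman) condition: \cite[Chapter 6]{hartman2002ordinary} states that the total differential equation $Dz = V(x,z)$ (here with $V$ replaced by $-V$, which changes nothing since the sign cancels on both sides of the integrability identity) has, through each interior point $(x_0,z_0)$, a unique local $C^1$ solution $z$ with $z(x_0)=z_0$ precisely when the integrability condition \eqref{eq:Hartman condition} holds throughout a neighborhood of $(x_0,z_0)$ — and, since $V$ is $C^1$, that solution is automatically $C^2$. By Theorem \ref{thm:Hartman verification}, the assumptions \eqref{condition one} ($\nabla\times S=0$) and \eqref{condition two} ($S\times D|S|^2=0$) holding on an open set containing $x_0$ are equivalent to \eqref{eq:Hartman condition} holding on a neighborhood $\mathcal U\subseteq\mathcal D$ of $(x_0,z_0)$. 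Hence the Hartman theory applies and yields a unique $C^2$ function $z$ on some neighborhood $U\subseteq\Omega$ of $x_0$ with $Dz = -V(x,z)$ and $z(x_0)=z_0$.

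Finally I would translate back to $\rho$. Set $\rho(x) = a - z(x)$; this is $C^2$ on $U$, satisfies $D\rho = -Dz = V(x,a-\rho) = \tfrac{\kappa_1 Sx/(a-\rho)}{\kappa_1 - \sqrt{|Sx/(a-\rho)|^2+1}}$, which is exactly \eqref{eq:PDE of rho} (equivalently \eqref{eq:form 2 of PDE}), and $a-\rho(x_0)=z_0$ as required. The solution $z$ of a total differential equation with values in the open set $\mathcal D$ stays in $\mathcal D$ (this is part of what it means to be a solution in Hartman's sense, but if one wants to be careful one shrinks $U$ using continuity of $z$ so that $(x,z(x))\in\mathcal D$ for all $x\in U$); therefore for every $x\in U$ we have $a>a-\rho(x)>|Sx|/\sqrt{\kappa_1^2-1}$, i.e. the inequality \eqref{eq:imaging inequality}. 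In particular $a-\rho>0$, so $\rho<a$, and since $\rho$ measures the length of a ray segment emitted upward from $\{x_3=0\}$, $\rho$ is positive; alternatively one observes that the convexity-type bound forces $\rho$ to be $C^2$ and the explicit form of $D\rho$ keeps $\rho$ continuous, so positivity of $\rho$ at one point (which can be arranged by the choice of $z_0<a$, since $\rho(x_0)=a-z_0$ can be taken positive, or simply imposed geometrically) propagates on a possibly smaller neighborhood. Uniqueness of $\rho$ follows from the uniqueness clause in Hartman's theorem applied to $z$.

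I do not expect a genuine obstacle here — the statement is essentially a packaging of Theorem \ref{thm:Hartman verification} with a black-box citation. The only points requiring a little care are: (i) verifying that the sign flip between $D z = -V$ and $D(a-\rho)=V$ does not affect the integrability condition (it does not, since replacing $V$ by $-V$ multiplies both sides of \eqref{eq:Hartman condition} by $-1$); (ii) making sure the solution remains inside the admissible region $\mathcal D$, which is handled by continuity and shrinking $U$; and (iii) pinning down positivity of $\rho$, which either comes for free from the geometric meaning of $\rho$ or from choosing the initial value appropriately. None of these is a serious difficulty, so the proof is short.
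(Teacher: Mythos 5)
Your proposal is correct and follows exactly the paper's (one-line) proof: combine Theorem \ref{thm:Hartman verification} with the existence--uniqueness theory for total differential equations in \cite[Chapter 6]{hartman2002ordinary}, then translate $z=a-\rho$ back and shrink the neighborhood to keep the solution in the admissible region; you simply spell out the details the paper leaves implicit. The only blemish is a harmless sign slip (the paper's convention from \eqref{eq:form 2 of PDE} is $Dz=V(x,z)$, i.e.\ $D\rho=-V(x,a-\rho)$, whereas you write $Dz=-V$ and later $D\rho=V$), which, as you yourself note, does not affect the integrability condition or the conclusion.
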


\begin{remark}\label{rmk:class maps}\rm Condition \eqref{condition one} is equivalent to say that $DS$ is a symmetric matrix. Condition \eqref{condition two} is equivalent to say that $(DS)(S^t)$ is parallel to $S^t$. Hence if $Sx\neq (0,0)$, $S^tx$ is an eigenvector of $DS(x)$. In this case, by the spectral theorem $S^t_{\perp}x=\begin{pmatrix} -S_2x\\ S_1x\end{pmatrix}$ is also an eigenvector of $DS(x)$. This fact will be needed later in Theorem \ref{thm:non singularity}.
\end{remark}

\begin{remark}\label{rmk:admissible maps}\rm
Since $\Omega$ is simply connected, \eqref{condition one} implies the existence of a real-valued $C^2$ function $s$ such that $Ds=S$. Replacing in \eqref{condition two} yields $Ds\times (Ds D^2s)=0$ and so $s$ solves the following quasilinear PDE
$$\left(s_{x_1}^2-s_{x_2}^2\right)s_{x_1x_2}+s_{x_1}s_{x_2}\left(s_{x_2x_2}-s_{x_1x_1}\right)=0,$$
which using Cauchy Kowalevski Theorem, \cite{bers1964partial}, has a unique local solution for a large class of initial data.

We list interesting admissible maps where conditions \eqref{condition one} and \eqref{condition two} can be easily verified.
\begin{itemize}
\item $Tx=(1+\alpha)x$ with $\alpha\neq -1$. In this case, $Sx=\alpha x= D\left(\frac{\alpha}{2}|x|^2\right)$. These maps $T$ correspond to dilations when $\alpha>0$, and to contractions when $-1<\alpha<0$. The case when $\alpha=-1$ is avoided since in that case, we get $T=0$ which is not injective. 
\item $Tx=(h(x_1),x_2)$, with $h$ a $C^1$ injective one variable real function. In this case, $Sx=(h(x_1)-x_1,0)$. These maps $T$ correspond to a transformation only in the horizontal variable. Similarly, transformations in the vertical direction $Tx=(x_1,h(x_2))$ are admissible maps.
\item Let $s$ be a $C^2$ function satisfying the Eikonal equation $|D s|=C$ for some $C>0$, and $S=Ds$. $S$ is the gradient of a function and with constant length then it clearly verifies \eqref{condition one} and \eqref{condition two}. Assume moreover that $DSx_0+I\neq 0$ at some $x_0$, then by the inverse function theorem $Tx=Sx+x$ is injective in a neighborhood of $x_0$ and is hence an admissible map in that neighborhood.
\end{itemize}

An interesting connection can be noticed between the admissible maps $S$ and the infinity Laplacian operator \cite{EvansInfinityLaplacian}. In fact if $S=Ds$ for $s$ a $C^2$ function and $S$ satisfies \eqref{condition two} then there exists a function $t(x)$ such that $(D^2s(x))(Ds(x))^t=t(x) (Ds(x))^t$ and hence dotting both sides with $(Ds)^t$, we conclude that $s$ satisfies the following inhomogeneous infinity Laplacian equation
$$\dfrac{1}{|Ds|^2} \langle Ds\, D^2s,Ds\rangle =t,$$
which is studied in \cite{Aronson1}, and \cite{LIU20125693}.
\end{remark}

\subsection{Existence of $\phi$}\label{sec: existence of phi}
Having found in Section 
\ref{sec: existence of rho} 
conditions on the map $T$ so 
that there exists a $C^2$ surface $\sigma_1$ that refracts vertical rays emitted from  $(x,0)$, with $x$ in a neighborhood of a point $x_0$  in $\Omega$, into the point $(Tx,a)$, we next use the analysis 
of Section \ref{sec: uniform 
refraction} to study the 
existence of a phase discontinuity $\phi$ defined in a neighborhood of every point $(Tx,a)$ so that the rays leave the lens $(\sigma_1,(\sigma_2,\phi))$ vertically into the point $(Tx,c)$, refer to Figure \ref{Imaging Problem} and to Section \ref{subsec: setup imaging}. 

More specifically, from Theorem \ref{thm:sufficient for vertical field}, it is sufficient to find conditions on the map $S$ so that the solution $\rho$ to \eqref{eq:PDE of rho} is $C^3$, and the matrices $D^2\rho$ and $\mathcal A$ given in \eqref{eq:mathcal A}
are invertible at $x_0$.


Before stating the main result of this section, we need the following Lemma.
\begin{lemma}\label{lem:calculation}
Let $\varphi$ be the function given in \eqref{eq:psi}, then for every ${\bf y}\in \mathbb R^n$ such that $|{\bf y}|<\sqrt{\kappa_1^2-1}$ 
$$I+\dfrac{\kappa_1^2-1}{\kappa_1^2} \varphi^2(|{\bf y}|^2) ({\bf y}\otimes {\bf y})=\left(I+\varphi(|{\bf y}|^2)({\bf y}\otimes {\bf y})\right)\left(I+2\dfrac{\varphi'(|{\bf y}|^2)}{\varphi(|{\bf y}|^2)}{\bf y}\otimes {\bf y}\right)$$
\end{lemma}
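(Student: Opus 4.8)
The plan is to prove the identity by a direct computation exploiting the fact that all matrices involved are polynomials in the single rank-one matrix ${\bf y}\otimes {\bf y}$, hence they all commute and the whole problem collapses to an identity among scalars. Write $t=|{\bf y}|^2$ and $P={\bf y}\otimes {\bf y}$, so that $P^2=tP$ (this is the key algebraic fact: $P$ is $t$ times a rank-one projection). Abbreviate $\varphi=\varphi(t)$ and $\varphi'=\varphi'(t)$. Then the right-hand side expands to
\begin{align*}
\bigl(I+\varphi P\bigr)\Bigl(I+2\tfrac{\varphi'}{\varphi}P\Bigr)
&= I+\Bigl(\varphi+2\tfrac{\varphi'}{\varphi}\Bigr)P+2\varphi'\,P^2\\
&= I+\Bigl(\varphi+2\tfrac{\varphi'}{\varphi}+2\varphi' t\Bigr)P,
\end{align*}
using $P^2=tP$. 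The left-hand side is $I+\tfrac{\kappa_1^2-1}{\kappa_1^2}\varphi^2 P$. So it suffices to verify the scalar identity
\[
\frac{\kappa_1^2-1}{\kappa_1^2}\,\varphi^2 \;=\; \varphi+\frac{2\varphi'}{\varphi}+2\varphi' t .
\]

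To check this scalar identity I would substitute the explicit formula $\varphi(t)=\dfrac{\kappa_1}{\kappa_1-\sqrt{t+1}}$ together with the relation $\varphi'(t)=\dfrac{1}{2\kappa_1\sqrt{t+1}}\varphi^2(t)$ from \eqref{eq:phi'}. With this, $\dfrac{\varphi'}{\varphi}=\dfrac{\varphi}{2\kappa_1\sqrt{t+1}}$ and $\varphi' t=\dfrac{t\,\varphi^2}{2\kappa_1\sqrt{t+1}}$, so the right-hand side of the scalar identity becomes
\[
\varphi+\frac{\varphi^2}{\kappa_1\sqrt{t+1}}+\frac{t\,\varphi^2}{\kappa_1\sqrt{t+1}}
=\varphi+\frac{(1+t)\varphi^2}{\kappa_1\sqrt{t+1}}
=\varphi+\frac{\sqrt{t+1}}{\kappa_1}\varphi^2 .
\]
Then factoring out $\varphi^2/\kappa_1$ and using $\varphi=\dfrac{\kappa_1}{\kappa_1-\sqrt{t+1}}$, i.e. $\dfrac{1}{\varphi}=\dfrac{\kappa_1-\sqrt{t+1}}{\kappa_1}$, gives $\varphi=\dfrac{\varphi^2}{\kappa_1}(\kappa_1-\sqrt{t+1})$, so the right-hand side equals $\dfrac{\varphi^2}{\kappa_1}\bigl(\kappa_1-\sqrt{t+1}+\sqrt{t+1}\bigr)=\dfrac{\varphi^2}{\kappa_1}\cdot\kappa_1\cdot\dfrac{\kappa_1^2-1}{\kappa_1^2}$ after a final simplification; more carefully, one collects $\dfrac{\varphi^2}{\kappa_1}(\kappa_1-\sqrt{t+1}) + \dfrac{\sqrt{t+1}}{\kappa_1}\varphi^2 = \varphi^2$, which is not yet the claimed $\dfrac{\kappa_1^2-1}{\kappa_1^2}\varphi^2$, signalling that the bookkeeping must be redone with the $2\varphi'/\varphi$ term kept separate rather than merged. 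So the safe route is to clear denominators: multiply the target scalar identity through by $\kappa_1^2(\kappa_1-\sqrt{t+1})^2/\kappa_1^2$ — equivalently substitute $\varphi^2=\dfrac{\kappa_1^2}{(\kappa_1-\sqrt{t+1})^2}$ everywhere — and reduce to a polynomial identity in $\kappa_1$ and $s:=\sqrt{t+1}$, which is then verified by straightforward expansion.

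The only real subtlety, and where I expect to spend the most care, is the bookkeeping in that last scalar reduction: the term $2\varphi'/\varphi$ behaves differently from $2\varphi' t$, and it is tempting to combine them prematurely as I did above and lose a factor. The clean way to avoid error is to introduce $s=\sqrt{t+1}\in(1,\kappa_1)$ (the constraint $|{\bf y}|<\sqrt{\kappa_1^2-1}$ is exactly $t<\kappa_1^2-1$, i.e. $s<\kappa_1$, so $\varphi$ and $\varphi'$ are well defined and $\varphi>0$), rewrite $\varphi=\kappa_1/(\kappa_1-s)$, $\varphi'=\varphi^2/(2\kappa_1 s)$, $t=s^2-1$, and then both sides of the desired matrix identity become $I+(\text{explicit rational function of }\kappa_1,s)\,P$; equating the coefficients of $P$ leaves a rational identity whose numerator, after clearing the common denominator $\kappa_1(\kappa_1-s)^2$, is a short polynomial identity in $\kappa_1$ and $s$ that checks out directly. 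I would present the proof in this order: (1) set $P={\bf y}\otimes{\bf y}$, note $P^2=|{\bf y}|^2 P$ and that all factors are polynomials in $P$; (2) expand the right-hand side to $I+(\varphi+2\varphi'/\varphi+2\varphi' t)P$; (3) reduce to the scalar coefficient identity; (4) substitute the closed forms via $s=\sqrt{|{\bf y}|^2+1}$ and verify the resulting polynomial identity.
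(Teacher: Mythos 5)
Your strategy is the paper's own: both arguments use $({\bf y}\otimes {\bf y})^2=|{\bf y}|^2\,({\bf y}\otimes {\bf y})$ to collapse the matrix identity to a single scalar identity in the coefficient of ${\bf y}\otimes{\bf y}$, and then verify that identity from the closed forms \eqref{eq:psi} and \eqref{eq:phi'} (the paper subtracts the two sides and shows the coefficient of $({\bf y}\otimes{\bf y})\varphi$ vanishes; you expand the product and equate coefficients, which is the same computation). Your reduction to the scalar identity
\[
\frac{\kappa_1^2-1}{\kappa_1^2}\,\varphi^2=\varphi+\frac{2\varphi'}{\varphi}+2\varphi'\,t
\]
is correct, and the identity is true. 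The one concrete error is in your first attempt to check it: from $\varphi'=\dfrac{1}{2\kappa_1\sqrt{t+1}}\varphi^2$ one gets $\dfrac{2\varphi'}{\varphi}=\dfrac{\varphi}{\kappa_1\sqrt{t+1}}$, not $\dfrac{\varphi^2}{\kappa_1\sqrt{t+1}}$ as in your display; that stray factor of $\varphi$ is what produced the spurious value $\varphi^2$, not any ``premature combining'' of the two terms. With the correct power and $s=\sqrt{t+1}$, using $\varphi=\varphi^2\,\dfrac{\kappa_1-s}{\kappa_1}$, the right-hand side becomes
\[
\frac{\varphi^2}{\kappa_1^2 s}\Bigl[(\kappa_1-s)(\kappa_1 s+1)+\kappa_1(s^2-1)\Bigr]=\frac{\varphi^2}{\kappa_1^2 s}\,(\kappa_1^2-1)s=\frac{\kappa_1^2-1}{\kappa_1^2}\,\varphi^2,
\]
where the factorization $(\kappa_1^2-1)s-\kappa_1(s^2-1)=(\kappa_1-s)(\kappa_1 s+1)$ is precisely the identity displayed at the end of the paper's proof. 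So your fallback plan (substitute $s$, clear denominators, check a polynomial identity) does succeed, but the detour and its misdiagnosis should be deleted and replaced by this corrected one-line substitution.
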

\begin{proof}
\eqref{eq:psi}, \eqref{eq:phi'}, and the fact that $({\bf y}\otimes {\bf y})^2=|{\bf y}|^2({\bf y}\otimes {\bf y})$, yields the following
    \begin{align*}
\mathcal D&:=I+\dfrac{\kappa_1^2-1}{\kappa_1^2} \varphi^2(|{\bf y}|^2) ({\bf y}\otimes {\bf y})-\left(I+\varphi(|{\bf y}|^2)({\bf y}\otimes {\bf y})\right)\left(I+2\dfrac{\varphi'(|{\bf y}|^2)}{\varphi(|{\bf y}|^2)}{\bf y}\otimes {\bf y}\right)\\
&=({\bf y}\otimes {\bf y})\varphi(|{\bf y}|^2)\left(\dfrac{\kappa_1^2-1}{\kappa_1^2}\varphi(|{\bf y}|^2)-\dfrac{1}{\kappa_1\sqrt{|{\bf y}|^2+1}}-1-\dfrac{\varphi(|{\bf y}|^2)}{\kappa_1\sqrt{|{\bf y}|^2+1}}|{\bf y}|^2\right)\\
&=\dfrac{({\bf y}\otimes {\bf y})\,\varphi(|{\bf y}|^2)}{\kappa_1^2\sqrt{|{\bf y}|^2+1}}\left(\varphi(|{\bf y}|^2)\left((\kappa_1^2-1)\sqrt{|{\bf y}|^2+1}-\kappa_1|{\bf y}|^2\right)-\kappa_1\left(1+\kappa_1\sqrt{|{\bf y}|^2+1}\right)\right)
    \end{align*}
Noticing that for every $\tau\in [0,\kappa_1^2-1)$ 
$$(\kappa_1^2-1)\sqrt{\tau+1}-\kappa_1\tau=\left(\kappa_1-\sqrt{\tau+1}\right)\left(\kappa_1\sqrt{\tau+1}+1\right)=\dfrac{\kappa_1}{\varphi(\tau)}\left(\kappa_1\sqrt{\tau+1}+1\right),$$
we conclude that $\mathcal D=0$.
\end{proof}

\begin{theorem}\label{thm:non singularity}
Given $x_0\in \Omega$, $T$ a $C^1$ a map, with $Sx=Tx-x$ satisfying \eqref{condition one}, \eqref{condition two} in a neighborhood of $x_0$, $a>0$ verifying \eqref{eq:thickness} at $x_0$, let $\rho$ be a positive $C^2$ solution to \eqref{eq:form 2 of PDE} with $a-\rho(x_0)=z_0$ such that $z_0$ satisfies \eqref{eq:inequality local}, then at $x=x_0$
\begin{align}
D^2\rho&=\dfrac{\varphi}{z_0}\left(I+\dfrac{2}{z_0^2}\dfrac{\varphi'}{\varphi}(S\otimes S)\right)\left(\dfrac{\varphi}{z_0^2}(S\otimes S)-DS\right) \label{eq:hessian at x0}\\
\mathcal A&=\left(I+\dfrac{2}{z_0^2}\dfrac{\varphi'}{\varphi}(S\otimes S)\right)(I+DS)\label{eq:A at x0}
\end{align}
with  $\varphi$ and $\varphi'$ given in \eqref{eq:psi} and \eqref{eq:phi'} are evaluated at $\left|\dfrac{Sx_0}{z_0}\right|^2$, $S$ and $DS$ are evaluated at $x_0$.

Therefore, $\mathcal A$ is invertible at $x_0$ if and only if $DT=I+DS$ is invertible i.e. $T$ is a diffeomorphism in a neighborhood of $x_0$. Moreover,
\begin{enumerate}
\item If $Sx_0=(0,0)$, i.e. $x_0$ is a fixed point of $T$, then $D^2\rho(x_0)$ is invertible if and only if $DS(x_0)$ is invertible.
\item If $Sx_0\neq (0,0)$ then denoting by $\zeta$ and $\zeta_{\perp}$ the eigenvalues of $DS$ corresponding to the eigenvectors $S^t$ and $S^t_{\perp}$, we get that $D^2\rho(x_0)$ is invertible if and only if $\zeta\neq \dfrac{|Sx_0|^2}{z_0^2}\varphi\left(\left|\dfrac{Sx_0}{z_0}\right|^2\right)$ and $\zeta_{\perp}\neq 0$.
\end{enumerate}
\end{theorem}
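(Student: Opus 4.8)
The plan is to differentiate the first-order system \eqref{eq:form 2 of PDE} once more, simplify using the integrability conditions \eqref{condition one}--\eqref{condition two} in the form given by Remark \ref{rmk:class maps}, and then factor the resulting matrices with Lemma \ref{lem:calculation}; the invertibility claims will then fall out of the factored forms. Fix $z_0=a-\rho(x_0)$, $\y=Sx_0/z_0$ (so $|\y|^2<\kappa_1^2-1$ by \eqref{eq:inequality local}), and write $\varphi,\varphi'$ for $\varphi(|\y|^2),\varphi'(|\y|^2)$, both positive by \eqref{eq:psi}--\eqref{eq:phi'}. Since $D\rho=-V(x,a-\rho)$ with $V_i(x,z)=\varphi(|Sx/z|^2)S_ix/z$, at $x_0$ one has $D\rho=-\varphi\,\y$ and hence $D\rho\otimes D\rho=\varphi^2(\y\otimes\y)$. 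For the Hessian, put $z(x)=a-\rho(x)$ so that $z_{x_j}=V_j(x,z(x))$; the chain rule gives $z_{x_ix_j}=\partial V_j/\partial x_i+(\partial V_j/\partial z)\,V_i$, and substituting \eqref{eq:vixj}--\eqref{eq:Viz} (no second derivatives of $S$ enter, since $D\rho$ depends on $S$ alone) yields at $x_0$, with $S=Sx_0$,
\[
z_{x_ix_j}=\frac{2\varphi'}{z_0^3}(S\cdot S_{x_i})S_j+\frac{\varphi}{z_0}(DS)_{ij}-\frac{2\varphi'\varphi|S|^2}{z_0^5}S_iS_j-\frac{\varphi^2}{z_0^3}S_iS_j .
\]
By Remark \ref{rmk:class maps}, \eqref{condition one} makes $DS$ symmetric and \eqref{condition two} gives $S\cdot DS=\zeta S$ when $Sx_0\neq(0,0)$ (so $S\cdot S_{x_i}=\zeta S_i$ and $(S\otimes S)(DS)=\zeta(S\otimes S)$), while the $(S\otimes S)$-terms vanish when $Sx_0=(0,0)$; since $D^2\rho=-D^2z$ this reads $D^2\rho=-\frac{\varphi}{z_0}DS+\big(\frac{\varphi^2}{z_0^3}+\frac{2\varphi'\varphi|S|^2}{z_0^5}-\frac{2\varphi'\zeta}{z_0^3}\big)(S\otimes S)$, and expanding the right side of \eqref{eq:hessian at x0} with $(S\otimes S)^2=|S|^2(S\otimes S)$ and $(S\otimes S)(DS)=\zeta(S\otimes S)$ reproduces it, giving \eqref{eq:hessian at x0}.

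\textbf{The matrix $\mathcal A$.} Next I would evaluate $\Delta$ at $x_0$: from \eqref{eq:auxiliary} and \eqref{eq:nice Delta} together with $|(Sx_0,z_0)|=z_0\sqrt{|\y|^2+1}$, a short computation collapses everything to $\kappa_1^2+\Delta(x_0)=(\kappa_1^2-1)\varphi$, whence $\frac{(1-\kappa_1^2)(a-\rho)}{\kappa_1^2+\Delta}=-z_0/\varphi$ at $x_0$ and, from \eqref{eq:mathcal A} and \eqref{eq:hessian at x0},
\[
\mathcal A=I+\frac{\kappa_1^2-1}{\kappa_1^2}\varphi^2(\y\otimes\y)-\Big(I+\frac{2\varphi'}{\varphi}(\y\otimes\y)\Big)\big(\varphi(\y\otimes\y)-DS\big).
\]
Lemma \ref{lem:calculation} identifies $I+\frac{\kappa_1^2-1}{\kappa_1^2}\varphi^2(\y\otimes\y)$ with $\big(I+\varphi(\y\otimes\y)\big)\big(I+\frac{2\varphi'}{\varphi}(\y\otimes\y)\big)$; the two factors commute, so pulling $I+\frac{2\varphi'}{\varphi}(\y\otimes\y)$ out on the left gives
\[
\mathcal A=\Big(I+\frac{2\varphi'}{\varphi}(\y\otimes\y)\Big)\Big[\big(I+\varphi(\y\otimes\y)\big)-\big(\varphi(\y\otimes\y)-DS\big)\Big]=\Big(I+\frac{2\varphi'}{\varphi}(\y\otimes\y)\Big)(I+DS),
\]
which is \eqref{eq:A at x0} after writing $\frac{2\varphi'}{\varphi}(\y\otimes\y)=\frac{2\varphi'}{z_0^2\varphi}(S\otimes S)$.

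\textbf{Invertibility.} Here I would diagonalize: by Remark \ref{rmk:class maps}, $\y\otimes\y$ and $DS$ are simultaneously diagonal in the orthonormal eigenbasis $\{S^t/|S|,\,S^t_\perp/|S|\}$ of $DS$. Since $\varphi,\varphi'>0$, $\det\big(I+\frac{2\varphi'}{z_0^2\varphi}(S\otimes S)\big)=1+\frac{2\varphi'}{z_0^2\varphi}|S|^2>0$, so \eqref{eq:A at x0} shows $\mathcal A$ is invertible iff $I+DS=DT$ is, i.e.\ $T$ is a local diffeomorphism at $x_0$ (when $Sx_0=(0,0)$ the formula just reads $\mathcal A=I+DS$). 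Likewise $\det D^2\rho=(\varphi/z_0)^2\big(1+\frac{2\varphi'}{z_0^2\varphi}|S|^2\big)\det\big(\frac{\varphi}{z_0^2}(S\otimes S)-DS\big)$, so $D^2\rho$ is invertible iff $\det\big(\frac{\varphi}{z_0^2}(S\otimes S)-DS\big)\neq0$: this is $\det(-DS)\neq0$ when $Sx_0=(0,0)$, giving (1), while if $Sx_0\neq(0,0)$ the matrix $\frac{\varphi}{z_0^2}(S\otimes S)-DS$ equals $\operatorname{diag}\!\big(\frac{\varphi|S|^2}{z_0^2}-\zeta,\,-\zeta_\perp\big)$ in the above basis, hence invertible iff $\zeta\neq\frac{|Sx_0|^2}{z_0^2}\varphi(|Sx_0/z_0|^2)$ and $\zeta_\perp\neq0$, giving (2).

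\textbf{Main obstacle.} The bulk of the work is the bookkeeping in the first step: carrying out the double differentiation without sign errors, recognizing that \eqref{condition one}--\eqref{condition two} are exactly what reduce the generic Hessian to a rank-one perturbation of $DS$, and noticing the simplification $\kappa_1^2+\Delta(x_0)=(\kappa_1^2-1)\varphi$. The conceptual point that makes everything clean is that $S^t$ is an eigenvector of the symmetric matrix $DS$, so $\y\otimes\y$, $DS$ and hence all matrices in play are simultaneously diagonalizable; Lemma \ref{lem:calculation} then performs the factorization and the invertibility criteria become mere eigenvalue conditions.
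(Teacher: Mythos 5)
Your proposal is correct and follows essentially the same route as the paper: differentiate the first-order system via \eqref{eq:vixj}--\eqref{eq:Viz}, use $(S\otimes S)^2=|S|^2(S\otimes S)$ and the simplification $\kappa_1^2+\Delta(x_0)=(\kappa_1^2-1)\varphi$, factor $\mathcal A$ with Lemma \ref{lem:calculation}, and read off invertibility from the eigenvalues in the $\{S^t,S^t_{\perp}\}$ basis. The only (cosmetic) difference is that you invoke \eqref{condition two} to replace $(S\otimes S)DS$ by $\zeta(S\otimes S)$ when checking \eqref{eq:hessian at x0}, whereas the paper factors the Hessian directly without needing that condition at this stage.
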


\begin{proof}
Recall from \eqref{eq:form 2 of PDE} that
$\rho_{x_i}(x)=-V_i(x,a-\rho(x)).$
Letting $z(x)=a-\rho(x)$, then  \eqref{eq:form 2 of PDE}, \eqref{eq:vixj},\eqref{eq:Viz}, and the formula for $V$ yield
\begin{align*}
\rho_{x_ix_j}&=-\dfrac{\partial V_i}{\partial x_j}-V_j \dfrac{\partial V_i}{\partial z}=-2\dfrac{S_i}{z^3}(S\cdot S_{x_j})\varphi'
-\dfrac{1}{z}\varphi
(S_i)_{x_j}-\varphi
\dfrac{S_j}{z}\left(-\dfrac{2|S|^2}{z^4}\varphi'
S_i-\dfrac{1}{z^2}\varphi
S_i\right),
\end{align*}
and so
$$D^2\rho(x_0)=-\dfrac{2}{z_0^3}\varphi'(S\otimes S) DS-\dfrac{1}{z_0}\varphi DS+\left(\dfrac{2|S|^2}{z_0^5}\varphi\varphi'+\dfrac{1}{z_0^3}\varphi^2\right)(S\otimes S),
$$
with $S, DS$ evaluated at $x_0$, and $\varphi,\varphi'$ at $\left|\dfrac{Sx_0}{z_0}\right|^2$.

Since $(S\otimes S)^2=|S|^2(S\otimes S)$, then simplifying the above expression
\begin{align*}
D^2\rho(x_0)&=-\dfrac{\varphi}{z_0}\left(I+\dfrac{2}{z_0^2}\dfrac{\varphi'}{\varphi}S\otimes S\right)DS+\dfrac{2(S\otimes S)^2}{z_0^5}\varphi\varphi'+\dfrac{1}{z_0^3}\varphi^2(S\otimes S)\\
&=-\dfrac{\varphi}{z_0}\left(I+\dfrac{2}{z_0^2}\dfrac{\varphi'}{\varphi} S\otimes S\right)DS+\dfrac{\varphi^2}{z_0^3}\left( I+\dfrac{2}{z_0^2}\dfrac{\varphi'}{\varphi}S\otimes S\right)S\otimes S\\
&=\dfrac{\varphi}{z_0}\left( I+\dfrac{2}{z_0^2}\dfrac{\varphi'}{\varphi} S\otimes S\right)\left(\dfrac{\varphi}{z_0^2}S\otimes S-DS\right),
\end{align*}
hence obtaining \eqref{eq:hessian at x0}.
Since $\varphi,\varphi'>0$, and $S\otimes S$ is positive semidefinite, we deduce that $D^2\rho(x_0)$ is invertible if and only if $\dfrac{\varphi}{z_0^2}S\otimes S-DS$ is invertible. We consider the two cases:
\begin{itemize}
\item If $Sx_0=(0,0)$ then $\dfrac{\varphi}{z_0^2}S\otimes S-DS=-DS$, and so $D^2\rho(x_0)$ is invertible if and only if $DS(x_0)$ is invertible.
\item If $Sx_0\neq (0,0)$. We have that $S^tx_0$ and $S^t_{\perp}x_0$ are eigenvectors of $S\otimes S$ with corresponding eigenvalues $|Sx_0|^2$ and $0$. Also, from Remark \ref{rmk:admissible maps}, $S^tx_0$ and and $S^t_{\perp}x_0$ are eigenvectors of $DS(x_0)$ with corresponding eigenvalues denoted by $\zeta$ and $\zeta_{\perp}$. Hence $D^2\rho(x_0)$ is invertible if and only if $\varphi\left(\left|\dfrac{Sx_0}{z_0}\right|^2\right)\dfrac{|Sx_0|^2}{z_0^2}-\zeta\neq 0$ and $\zeta_{\perp}\neq 0$.
\end{itemize}

Next, we simplify the matrix $\mathcal A$ given in \eqref{eq:mathcal A}. From \eqref{eq:form 2 of PDE}
\begin{equation}\label{eq:DrhoDrho}
D\rho(x_0)\otimes D\rho(x_0)=\varphi^2\left(\left|\dfrac{Sx_0}{z_0}\right|^2\right)\dfrac{Sx_0\otimes Sx_0}{z_0^2}.
\end{equation}
From \eqref{eq:nice Delta} and \eqref{eq:psi}
\begin{align*}\kappa_1^2+\Delta(x_0)&=\kappa_1^2+\dfrac{\kappa_1^2|(Sx_0,z_0)|-\kappa_1z_0}{\kappa_1z_0-|(Sx_0,z_0)|}=\dfrac{\kappa_1^3-\kappa_1}{\kappa_1-\sqrt{\left|\dfrac{Sx_0}{z_0}\right|^2+1}}=(\kappa_1^2-1)\varphi\left(\left|\dfrac{Sx_0}{z_0}\right|^2\right),
\end{align*}
and so 
\begin{equation}\label{eq:factor of the Hessian}
    \dfrac{(a-\rho(x_0))(1-\kappa_1^2)}{\kappa_1^2+\Delta(x_0)}=-\dfrac{z_0}{\varphi\left(\left|\dfrac{Sx_0}{z_0}\right|^2\right)}.
\end{equation}
Replacing \eqref{eq:hessian at x0}, \eqref{eq:DrhoDrho}, and \eqref{eq:factor of the Hessian} in the expression of $\mathcal A$ in \eqref{eq:mathcal A}, we get
$$\mathcal A=I+\dfrac{\kappa_1^2-1}{\kappa_1^2}\dfrac{\varphi^2}{z_0^2}S\otimes S-\dfrac{z_0}{\varphi}D^2\rho.$$
Hence from Lemma \ref{lem:calculation} applied to ${\bf y}=\dfrac{Sx_0}{z_0}$ and \eqref{eq:hessian at x0} 
\begin{align*}
    \mathcal A&=\left(I+\dfrac{\varphi}{z_0^2}S\otimes S\right)\left(I+\dfrac{2}{z_0^2}\dfrac{\varphi'}{\varphi}S\otimes S\right)-\left( I+\dfrac{2}{z_0^2}\dfrac{\varphi'}{\varphi} S\otimes S\right)\left(\dfrac{\varphi}{z_0^2}S\otimes S-DS\right)
    &=\left(I+\dfrac{2}{z_0^2}\dfrac{\varphi'}{\varphi}S\otimes S\right)(I+DS).
\end{align*}
Hence, since $\varphi,\varphi'>0$ and $S\otimes S$ is positive semidefinite, we conclude that $\mathcal A(x_0)$ is invertible if and only if $I+DS(x_0)$ is invertible.
\end{proof}

Theorem \ref{thm:non singularity} concludes our analysis of the imaging problem in Section \ref{subsec: setup imaging} which can be summarized as follows. 
Given $\Omega$ open, and simply connected domain, $a>0$, $T:\Omega\mapsto \Omega^*$ a $C^2$ bijective map, define $Sx=Tx-x$. Let $x_0\in \Omega$, and assume $a$ and $Sx_0$ are such that $a>\dfrac{|Sx_0|}{\sqrt{\kappa_1^2-1}}$. Assume moreover $S$ satisfies \eqref{condition one} and \eqref{condition two} in a neighborhood of $x_0$.
Then, from Corollary \ref{cor:result}, for every $z_0$ satisfying inequality \eqref{eq:inequality local}, there exists $\rho$ positive solution to \eqref{eq:PDE of rho} such that $a-\rho(x_0)=z_0$. Since $S$ is $C^2$ then the solution $\rho$ is $C^3$. Letting $\sigma_1=\{(x,\rho(x))\}$ separating media I and II, $\sigma_1$ refracts the vertical rays emitted from $(x,0)$, $x$ in a neighborhood of $x_0$, into the $(Tx,a)$. Letting $c>a$, if moreover, $T$ is a diffeomorphism in a neighborhood of $x_0$, and $S$ satisfies cases (1), or (2) of Theorem \ref{thm:non singularity} then, from Theorem \ref{thm:sufficient for vertical field}, there exists a $C^1$ phase discontinuity $\phi$ defined in a neighborhood of every point $(Tx,a)\in \sigma_2,$ $x$ in a neighborhood of $x_0$, such that the ray at $(Tx,a)$ is refracted by the metasurface $(\sigma_2,\phi)$ into the point $(Tx,c).$
\subsection{ Examples}\label{subsec:Examples}
This section elaborates on the examples of admissible maps $T$ mentioned in Remark \ref{rmk:admissible maps}, and examines whether they satisfy the conditions of Theorem \ref{thm:non singularity}.

\paragraph{\bf Example 1.} Consider the maps $Tx=(1+\alpha)x$  in a neighborhood of $x_0=(0,0)$ with $\alpha\neq -1$. 

$T$ is clearly a diffeomorphism and so from Theorem \ref{thm:non singularity} the corresponding matrix $\mathcal A(x_0)$ is invertible. Also, $Sx_0=\alpha x_0=0$ and $DS(x_0)=\alpha I$. So from Theorem \ref{thm:non singularity} (case (1)) if $\alpha\neq 0$ then $D^2\rho$ is invertible, and hence for every $a>z_0>0$ the imaging problem has a local solution in a neighborhood of $x_0=(0,0)$ with $a-\rho(x_0)=z_0$.

In the case of $\alpha=0$, we cannot apply Theorem \ref{thm:sufficient for vertical field} since $DS=0$ and hence $D^2\rho$ is not invertible, however, notice that in this case $T$ is the identity map, then letting $\sigma_1$ be a horizontal plane below the plane $\{x_3=a\}$,  i.e. $\rho$ is constant, and $\varphi=0$, then the incident vertical rays are not deviated by the lens $(\sigma_1,(\sigma_2,\phi))$ and so the map $T$ can still be achieved in this case. 


\paragraph{\bf Example 2.} Assume $T$ only changes one of the coordinates, say for example $Tx=(h(x_1),x_2)$ with $h\in C^2$ and injective, and let $x_0=(0,0).$ Notice that in this case $Sx=(h(x_1)-x_1,0)$, $DS(x)=\left(\begin{matrix} h'(x_1)-1 & 0 \\ 0 & 0\end{matrix}\right)$. 

If $h(0)=0$, then $Sx_0=(0,0)$ but since $DS(x_0)$ is singular so from Theorem \ref{thm:non singularity} (case (1)) $D^2\rho$ is singular at $x_0$ and the existence of phase discontinuity $\phi$, in this case, cannot be deduced from Theorem \ref{thm:sufficient for vertical field}.

If $h(0)\neq 0$, then $Sx_0\neq (0,0)$. In this case $\zeta_{\perp}=0$ is the eigenvalue of $DS(x_0)$ corresponding to $S_{\perp}^tx_0$ and hence again by Theorem \ref{thm:non singularity} (case (2)) $D^2\rho(x_0)$ is singular and the existence of phase discontinuity $\phi$ cannot be deduced from Theorem \ref{thm:sufficient for vertical field}.

\paragraph{\bf Example 3.} Consider a function $s$ on $\Omega$ solution to the Eikonal equation $|D s|=C$ for some $C>0$, and $S=Ds$.
Let $x_0=(0,0)$, notice that since $|Sx_0|=C>0$ then $S^tx_0\neq (0,0)$. Squaring then differentiating the Eikonal equation we get that $D^2s(x_0)=DS(x_0)$ has an eigenvalue equal to $0$ corresponding to the eigenvector $S^tx_0$. From Theorem \ref{thm:non singularity}, $D^2\rho(x_0)$ and $\mathcal A(x_0)$ are invertible if and only if the eigenvalue $\zeta_{\perp}$ of $DS$ corresponding to $S_{\perp}^t$ is different than $0$ and $-1$. 

For example, let $s(x)=|x-\gamma|,$ with $\gamma=(\gamma_1,\gamma_2)\neq (0,0)$ solving $|D s|=1$. In this case, $Sx=\dfrac{x-\gamma}{|x-\gamma|}$ and 
$$DS=D^2s=\dfrac{1}{|x-\gamma|}I-\dfrac{(x-\gamma)\otimes (x-\gamma)}{|x-\gamma|^3}.$$
At $x_0=(0,0)$, the eigenvalues of $DS$ are $\zeta=0$ corresponding to $S^t(x_0)$ and $\zeta_{\perp}=\dfrac{1}{|\gamma|}$ corresponding to $S_{\perp}^t(x_0)$, hence from Theorem \ref{thm:non singularity} the matrix $D^2\rho$ and $\mathcal A$ are invertible at $x_0$ and a solution to the imaging problem exists in a neighborhood of $(0,0)$.


\subsection{The two-dimensional case}\label{subsec:2D} We end the paper by providing an easier presentation of the existence results in the two-dimensional case. Here, $\Omega\subseteq \mathbb R$, $\sigma_1=\{(t,\rho(t)\}_{t\in \Omega}$ with $\rho$ to be calculated, and $\sigma_2$ is contained in the horizontal line $y=a$ with $a>0$ and phase discontinuity $\phi:=\phi(u_1,u_2)$ to be found. For simplicity, using an appropriate system of coordinates, we solve in a neighborhood of $t_0=0$.

Given a $C^2$ bijective map $T$ defined on the interval $(-\tau,\tau)$, and letting $S=Tx-x$, from Section \ref{sec: existence of rho}, assume $a>\dfrac{|S(0)|}{\sqrt{\kappa_1^2-1}}$, and $a>z_0>\dfrac{|S(0)|}{\sqrt{\kappa_1^2-1}}$ we are interested in finding positive solution to the IVP
\begin{equation}\label{eq:ODE}\begin{cases}\rho'(t)=\dfrac{\kappa_1S(t)}{\sqrt{|S(t)|^2+(a-\rho(t))^2}-\kappa_1(a-\rho(t))}\\
a-\rho(0)=z_0\end{cases}.\end{equation}
By Picard's Theorem such a system has a unique local solution for every $C^2$ map $S$.
\begin{figure}[h]
\includegraphics[scale=0.3]{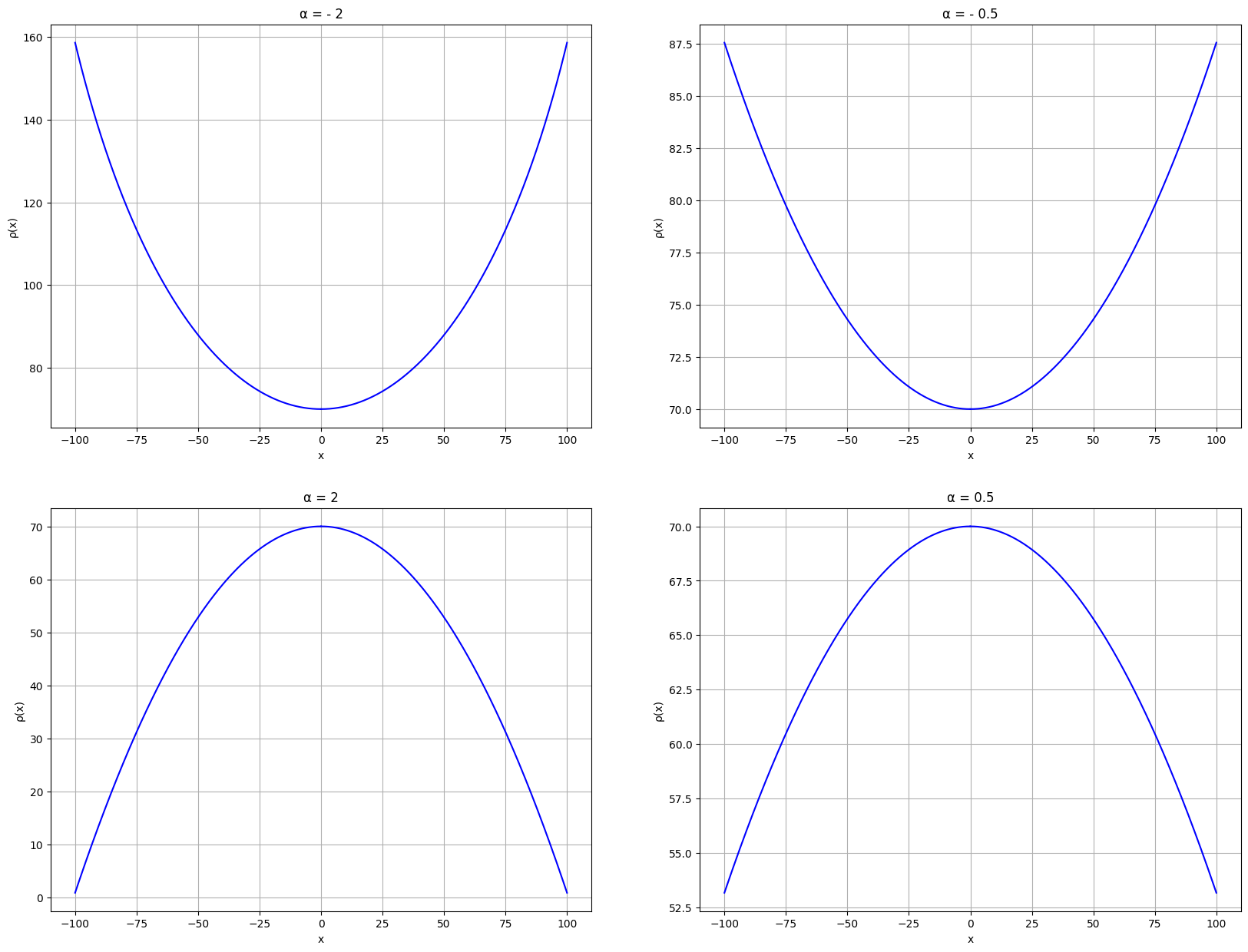}
\caption{}
\label{fig: graphs}
\end{figure}
In the particular case when $T(t)=(1+\alpha)t$, we have $S(t)=\alpha t$, and using the notation $\tilde \rho=a-\rho$ \eqref{eq:ODE} can be written as follows:

$$
\begin{cases}
    \dfrac{d\tilde \rho}{dt}=\dfrac{\dfrac{\alpha t}{\tilde \rho(t)}}{\kappa_1-\sqrt{\dfrac{\alpha^2\,t^2}{(\tilde \rho(t))^2}+1}}\\
    \tilde \rho(0)=z_0
\end{cases}.$$
The ODE in this case is homogeneous of the form $\tilde \rho'=F\left(\frac{t}{\tilde \rho}\right)$ that can be solved explicitly using the substitution $v(t)=\dfrac{t}{\tilde \rho(t)}$, See Figure \ref{fig: graphs} for the graph of solutions corresponding to different values of  $\alpha,$ and $z_0=70$.

In the more general case, given a map $T$, and $\rho$ a 
positive local solution to \eqref{eq:ODE} satisfying \eqref{eq:imaging inequality}. The curve 
$\sigma_1$ refracts rays emitted from 
$(t,0)$ with $t$ in a neighborhood to 
$t_0=0$ into the point $(Tx,a)$. For 
the existence of a phase discontinuity in a 
neighborhood of every point in $\sigma_2$ so that rays are refracted by $\sigma_2$ with vertical direction,  we need, from Theorems \ref{thm:sufficient for vertical field}, and \ref{thm:non singularity}, that
$S'(0)\neq \varphi\left(\dfrac{|S(0)|^2}{z_0^2}\right)\dfrac{S(0)^2}{z_0^2}$, and $S'(0)\neq -1$.

{\footnotesize
\bibliography{references}
}
\bibliographystyle{IEEEtranS}
\end{document}